\documentclass[11pt,reqno,letterpaper]{amsart}

\RequirePackage[authoryear,round]{natbib}
\RequirePackage[OT1]{fontenc}
\RequirePackage{amsthm,amsmath,graphicx,enumitem}
\RequirePackage[colorlinks,citecolor=blue,urlcolor=blue]{hyperref}

\usepackage{amssymb,tabularx,multicol,multirow,booktabs}
\usepackage[top=1in, bottom=1in, left=1in, right=1in]{geometry}
\usepackage{mhequ}
\usepackage[usenames,dvipsnames]{color}
\usepackage{tikz}
\usetikzlibrary{arrows.meta,positioning,patterns}
\usepackage{epstopdf}

\RequirePackage{xr}

\usepackage{xcolor}
\definecolor{darkbrown}{rgb}{0.5, 0.26, 0.13}
\numberwithin{equation}{section}
\theoremstyle{plain}
\newtheorem{theorem}{Theorem}[section]
\newtheorem{lemma}[theorem]{Lemma}
\newtheorem{corollary}[theorem]{Corollary}

\def \btheta{\boldsymbol{\theta}}
\def \bpi{\boldsymbol{\pi}}

\def \bX{\mathbf{X}}

\def \be{\begin{equs}}
	\def \ee{\end{equs}}

\def \E{\mathbb{E}}
\def \P{\mathbb{P}}

\def \R{\mathbb{R}}

\def \cR {\mathcal{R}}
\def \cC {\mathcal{C}}

\providecommand{\var}{\operatorname{Var}}
\providecommand{\bE}{\mathbf E}

\newcounter{cnstcnt}

\newcounter{cnst}

\begin{document}
\title[Sharp risks for sparse submatrix detection]{Sharp minimax risks and phase transitions in sparse submatrix detection}

\author{Subhajit Goswami}
\address{School of Mathematics\\Tata Institute of Fundamental Research\\1, Homi Bhabha Road, Mumbai 400005, India}
\email{goswami@math.tifr.res.in}

\author{Rajarshi Mukherjee}
\address{Harvard T.H. Chan School of Public Health\\ Department of Biostatistics\\
	655 Huntington Avenue, Boston, MA 02115}
\email{rmukherj@hsph.harvard.edu}

\subjclass[2020]{Primary 62F05; secondary 62H15, 60F10, 62F03, 62C20}
\keywords{Sparse submatrix detection, minimax risk, detection boundary, sharp asymptotics, truncated likelihood ratio, Gaussian random matrices}
\begin{abstract}
We study the minimax risk for detecting a sparse elevated-mean Gaussian submatrix inside a larger noisy matrix. When the planted submatrix has size $n\times n$ and the ambient matrix has size $N\times N$ with $N = n^{1+\alpha}$, 
the classical work of \cite{butuceasubmatrix2013} identifies the sharp detection boundary 
around which the minimax risk converges to $0$ or $1$. This paper extends that zero-one theory by determining the precise asymptotic rate of the minimax risk throughout 
a two-variable phase diagram. Above the detection boundary, we determine the precise exponent for the stretched or super-exponential decay of the risk. Below the boundary, where the risk tends to 1, we identify the exact polynomial order of 
the rate of convergence up to absolute multiplicative constants. In both of these  regimes, the form of the sharp asymptotics changes around the line $\alpha + \delta = 1/2$ where 
$\delta$ indicates the signed distance from the boundary. Finally, on the detection 
boundary, we show that the minimax risk converges to the non-degenerate constant $\frac12$ in the very sparse case where $n$ remains fixed and $N \to \infty$. Each of these 
rates corresponds to the risk of a suitably calibrated scan or sum test, whence follow the 
upper bounds. To show the sharpness of these bounds, we rely on refined second-moment 
methods applied to random variables chosen carefully according to the particular regime. Our 
results also extend to the tensor setting.
\end{abstract}

\maketitle

\section{Introduction} The theory of signal detection in high dimensions has matured significantly in the last three decades. Building on the seminal and foundational work of \cite{ingster2012nonparametric}, the research community has enriched the field with sharp asymptotic minimax separation results across many problems of modern interest, including high-dimensional regression, large matrix and tensor models, and statistical network models. For a given instance of structured signal detection in high dimensions, this program aims to derive the ``minimal signal strength" in the problem, as a function of the underlying structure, dimension, and structure required to detect the presence of a signal against high-dimensional background noise. Mathematically, this corresponds to deriving a notion of critical signal strength such that the minimax risk of testing converges to $0/1$ depending on whether the signal lies above or below the critical signal strength; see, for instance, \cite{donoho2004higher,addarioberry2010combinatorial,ariascastro2011global,ariascastro2011anomalous,butuceasubmatrix2013}. In standard 
statistical parlance, this translates to characterizing regimes of consistency/inconsistency of a testing problem under study. 

However, classical statistics also enjoys a comprehensive theory of asymptotic efficiency of tests that helps go beyond 
consistency/inconsistency regimes of testing to a more precise asymptotic characterization of the exact minimax risk of testing. Such a 
theory, however, is largely absent from the high-dimensional testing literature, beyond some {\em partial} results in the context of 
Gaussian sequence type models \citep{ligo2016detecting,mukherjee2020minimax}. In order to take further steps to fill this gap, in this paper, we aim to characterize the precise asymptotic behavior of minimax risks for testing the presence of a sparse elevated-mean $n \times n$ Gaussian submatrix embedded in a larger $N \times N$ matrix of Gaussian noise --- the critical signal strength of detection for which was studied in the seminal work of \cite{butuceasubmatrix2013}. In the sequel, we will refer to this critical signal strength as 
the Butucea--Ingster detection boundary (cf.~\eqref{def:Adelta}).

\vspace{0.1cm}

Our results in the current work reveal a rich two-variable phase transition picture 
for this problem which we describe completely; see Figure~\ref{fig:phase_diagram} 
below. Specifically, above the Butucea--Ingster detection boundary, the minimax risk is {\em stretched} or {\em super-exponentially} small (in $n$), and we determine its principal exponential order. Below the boundary, the minimax risk 
tends to 1, but at a non-trivial {\em power-law} rate that we identify up to 
absolute multiplicative constants. Thus the quantity $\min\{\mathcal R_n,1-\mathcal R_n\}$, where $\mathcal R_n$ denotes the minimax risk, changes its order from stretched or super-exponential above the boundary to polynomial below it. 
Moreover, within each of these two regimes, the sharp asymptotic form changes at the secondary {\em critical  line} $\alpha + \delta = 1/2$, which separates the so-called ``dense'' and ``sparse'' sub-regimes (see \S\ref{subsec:summary} below) in the parametrization $N = n^{1+\alpha}$ with $\delta$ measuring 
the signed distance from the boundary (see~\eqref{def:Adelta}). Finally, on the detection boundary with a fixed submatrix size $n$ and $N \to \infty$,  the minimax risk 
converges to the non-degenerate constant $\frac12$. Our results and methods also extend 
to the problem of sparse subtensor detection considered in \cite{MR4382029} for every 
fixed order $d \ge 2$. 

\vspace{0.1cm}

As expected, a major difficulty in building such a detailed picture is deriving the sharp lower 
bounds. We elaborate on this in \S\ref{subsec:overview} after introducing the model 
and summarizing our main results in the next two subsections.

\subsection{Model, notation and statement of the testing problem}\label{sec:stro}
We have the observations forming an $N \times N$ matrix $\bX=(X_{ij})_{1\le i, j \le N}$ with independent entries
\begin{equation}\label{def:Xdist}
  X_{ij}\sim N(\theta_{ij},1).  
\end{equation}
with $\theta_{ij} \in \R$ for all $1  \le i, j \le N$. We are interested in the following class of 
parameter spaces, parametrized by a positive integer $n\le N$ and a number $A \ge 0$, for testing against the simple null $\btheta = (\theta_{ij})_{1 \le i, j \le N} = \mathbf{0}$, namely
\begin{equation}\label{def:Theta}
\Theta(A,n, N) \stackrel{{\rm def.}}{=} \left\{
	\begin{array}{c}
    \btheta \in \R^{N \times N} : \exists\ \cR, \cC \subset[N] \mbox{ with }|\cR|= |\cC| = n \mbox{ s.t. } \theta_{ij}\ge A \\ \mbox{ for } (i, j) \in \cR\times\cC \mbox{ and } \theta_{ij} = 0 \mbox{ otherwise }
    \end{array}
	\right\}.
\end{equation}
See, e.g., \cite{MR3663635,ma2015computational,banks2018information,dadon2024detection,brennan2019universality,gamarnik2021overlap,oren2026inhomogeneous,hajek2018submatrix,kizildaug2025large} for a non-exhaustive list of works featuring a similar square submatrix setup. Now 
given any pair of sequences $A(N), n(N)$ indexed by $N$, we can consider the sequence of hypothesis testing problems
\begin{equation}\label{state:test_prob}
H_0:\btheta\equiv \mathbf 0
\qquad\mbox{against}\qquad
H_1:\btheta\in\Theta(A(N), n(N), N).
\end{equation}
For a sequence of tests, i.e., $\{0, 1\}$-valued measurable functions $T_{N} = T_{N}(\bX)$ 
of $\bX$, we define the {\em maximum risk} over $\Theta(A(N), n(N), N)$ as
\begin{equation}\label{eqn:bayes_risk}
\mathrm{Risk}(T_N, A(N), n(N))\stackrel{}{=}
\P_{0}(T_N = 1)+\sup_{\btheta\in\Theta(A(N), n(N), N)} \P_{\btheta}(T_N = 0),
\end{equation}
where $\P_{\btheta}$ (respectively $\P_0$) denotes the joint law \eqref{def:Xdist} of the 
observations under mean matrix $\btheta$ (respectively $\mathbf 0$).

In this paper, we work with pairs $(N, n)$ satisfying
\begin{equation}\label{eq:mnregime}
N = \lfloor n^{1+\alpha}\rfloor,\qquad \alpha > 0 \mbox{ (fixed)},
\end{equation}
with $n \to \infty$. Accordingly, we choose $n$ as the underlying {\em scale parameter} 
diverging to $\infty$ in the rest of the paper. The {\em minimax risk} for the testing problem 
\eqref{state:test_prob}  can now be defined as:
\begin{equation}\label{def:minmax_risk}
\mathcal R_n(A(n), \alpha) = \inf_{T_N} \mathrm{Risk}(T_N, A(n), N)
\end{equation}
where the infimum is taken over all test sequences $\{T_N\}$ with $N$ as in 
\eqref{eq:mnregime}. Finally, let
\begin{equation}\label{def:Adelta}
A^\ast(\delta, \alpha) = A^\ast(\delta, \alpha; n)\stackrel{{\rm def.}}{=}
\min\left\{n^{-(1-\alpha-\delta)},
\sqrt{\frac{4(1+\delta)\alpha\log n}{n}}\right\},
\qquad \delta\in(-1,\infty).
\end{equation}
In this parametrization, the Butucea-Ingster detection boundary \citep{butuceasubmatrix2013} can be stated as
\[
    \mathcal R_n(A^\ast(\delta, \alpha))\to
    \begin{cases}
        0, & \delta>0,\\
        1, & \delta<0,
    \end{cases}
\]
as $n \to \infty$ for fixed $\alpha$ and $\delta$. Thus $\delta$ measures signed distance 
from the detection threshold. The present paper determines the risk $\mathcal R_n(A^\ast(\delta, \alpha))$ to principal exponential order when $\delta > 0$, the difference $1 -\mathcal R_n(A^\ast(\delta, \alpha))$ up to absolute constants when $\delta < 0$, and 
the limiting risk when $\delta = 0$. With this notation, we summarize our main results below 
before describing them in detail in Section~\ref{sec:main_results}. Along the way, we also 
discuss the extension of our results to the {\em tensor} setting.

\subsection{Summary of main results}\label{subsec:summary} 
The main results are primarily organized according to the three regimes $\delta > 0$, $\delta < 0$ and $\delta = 0$. The asymptotic behavior of the risk further 
changes in each ``off-boundary'' regime around the secondary critical line $\alpha+\delta = 1/2$. In view of the dichotomy apparent from \eqref{def:Adelta}, we call the sub-regimes 
characterized by $\alpha + \delta \le 1/2$ and $\alpha + \delta > 1/2$ as {\em dense} and {\em 
sparse} respectively in the remainder of the article. These terms loosely refer to the 
size of $n$ relative to $N$ and thus degree of sparsity of problem when $\delta = 0$. We point the reader to Figure~\ref{fig:phase_diagram} for a schematic view of the resulting 
phase diagram. The precise statements are given in 
Theorems~\ref{thm:dense_above_boundary}--\ref{thm:criticality}. In each phase, the rate function corresponds to a specific test that furnishes 
an upper bound to the minimax risk. Establishing the optimality of these rates 
through matching lower bounds, however, is a more intricate enterprise. We discuss 
the ideas behind the proofs of these results in Section \ref{subsec:overview}.
\begin{figure}[t]
\begin{center}
\resizebox{0.98\textwidth}{!}{%
\begin{tikzpicture}[x=10.8cm,y=6.1cm,>=Latex,font=\scriptsize]
    \def\amax{1.08}
    \def\ymax{0.4}
    \def\ymin{-0.42}

    \fill[gray!15]  (0,0) -- (0,\ymax) -- (0.12,\ymax) -- (0.50,0) -- cycle;
    \fill[gray!6] (0.12,\ymax) -- (\amax,\ymax) -- (\amax, 0) -- (0.50,0) -- cycle;
    \fill[gray!32] (0,\ymin) -- (0,0) -- (0.50,0) -- (0.92,\ymin) -- cycle;
    \fill[gray!43] (0.50,0) -- (\amax,0) -- (\amax,\ymin) -- (0.92,\ymin) -- cycle;

    \draw[->,thick] (0,\ymin) -- (0,0.45) node[above] {$\delta$};
    \draw[->,thick] (0,0) -- (1.14,0) node[right] {$\alpha$};
    \draw[very thick] (0,0) -- (\amax,0);
    \draw[dashed,thick] (0.50,\ymin) -- (0.50,\ymax);
    \draw[dotted,very thick] (0.1,\ymax) -- (0.50,0) -- (0.92,\ymin);

    \node[align=center,text width=2.8cm] at (0.2,0.12)
        {$\displaystyle \log\mathcal R_n(A^\ast(\delta, \alpha))$\\[-0.4pt]
         $\displaystyle \sim -\tfrac18 n^{2\delta}$};
     \node[align=center,text width=3.2cm, text = darkbrown, scale = 0.8] at (0.36,0.32)
         {$\displaystyle \log\mathcal R_n(A^\ast_{{\rm BI}}(\delta, \alpha))$\\[0.1pt]
          \hspace{1.5pt} $\displaystyle \sim-\tfrac18 n^{2(\alpha+\delta)}$};
    \node[align=center,text width=4.5cm] at (0.79,0.22)
        {$\displaystyle \log\mathcal R_n(A^\ast(\delta, \alpha))$\\[-1pt]
         $\displaystyle \sim -\tfrac{\delta^2}{2(1+\delta)}\alpha n \log n$};
    \node[align=center,text width=3.1cm] at (0.2,-0.24)
        {$\displaystyle 1-\mathcal R_n(A^\ast(\delta, \alpha)) \asymp n^\delta$};
    \node[align=center,text width=5.1cm] at (0.85,-0.13)
        {$\displaystyle 1-\mathcal R_n(A^\ast(\delta, \alpha))$\\[-1pt]
         $\displaystyle \asymp \sqrt{(1+\delta)\alpha}\,n^{\frac12-\alpha}\sqrt{\log n}$};

    \node[align=center,text width=5.1cm, text = darkbrown, scale=0.8] at (0.65,-0.3)
        {$\displaystyle 1-\mathcal R_n(A^\ast_{{\rm BI}}(\delta, \alpha))$\\[0.1pt]
         $\displaystyle \asymp \sqrt{(1+\delta)\alpha}\,n^{\frac12-\alpha}\sqrt{\log n}$};     

    \node[above right] at (0.50,0.01) {$\alpha = 1/2$};
    \node[above,rotate=-30,fill=white,inner sep=1pt] at (0.35,0.15) {$\alpha+\delta=1/2$};
    \node[above] at (1.00,0.015) {$\delta=0$};

    \node[left] at (0,0) {$0$};
    \node[left] at (0,\ymax) {$\delta>0$};
    \node[left] at (0,\ymin) {$\delta<0$};

    \draw[fill=black] (1.01,0) circle (0.9pt);
    \node[below right, scale=0.75] at (1,-0.01) {$\mathcal R_n(A^\ast)\to \frac12$};
\end{tikzpicture}%
}
\end{center}
\caption{{\bf Phase diagram for the sharp minimax risk.} The horizontal line is the Butucea--Ingster detection boundary. Above it, the displayed rates (in black) describe the decay of $\mathcal R_n(A^\ast(\delta, \alpha))$; below it, they describe the difference $1-\mathcal R_n(A^\ast(\delta, \alpha))$. In each of these half-planes, the asymptotic form of 
$\mathcal R_n(A^\ast(\delta, \alpha))$ changes around the dotted line $\alpha+\delta=1/2$. 
Thus the lines $\delta = 0$ and $\alpha + \delta = 1/2$ split the diagram into four distinct phases for the decay of $\min\{\mathcal R_n(A^\ast(\delta, \alpha)), 1 - \mathcal R_n(A^\ast(\delta, \alpha))\}$ which are color coded in grayscale with darker shade indicating a larger asymptotic order. The rates corresponding to the signal strength $A_{{\rm BI}}^\ast(\delta, \alpha)$ (see \eqref{def:AdeltaBI}) are displayed in {\color{darkbrown}brown} in the two sectors bound between the lines $\alpha + \delta = 1/2$ and $\alpha = 1/2$ which also mark the region such that $A^\ast(\delta, \alpha) \ne A_{{\rm BI}}^\ast(\delta, \alpha)$.}
\label{fig:phase_diagram}
\end{figure}

\smallskip

\noindent\emph{Above the detection boundary.}
When $\delta>0$, the minimax risk tends to zero. In the dense part of the problem, $\alpha + \delta \le1/2$, a global sum test (see \eqref{def:sumscantest} below) is rate-optimal and it is shown in Theorem~\ref{thm:dense_above_boundary} that
\begin{equation}\label{eq:densesupheu}
\log\mathcal R_n(A^\ast(\delta, \alpha))\sim -\frac18 n^{2\delta}.    
\end{equation}
In the sparse sub-regime $\alpha+\delta > 1/2$, the so-called scan statistic (cf.~\eqref{def:sumscantest}) yields the optimal rate and we obtain in part~1 of Theorem~\ref{thm:sparse_riskbnd} that
\begin{equation}\label{eq:sparsesupheu}
\log\mathcal R_n(A^\ast(\delta, \alpha)) \sim -\frac{\delta^2}{4(1+\delta)} \log |\mathcal S|,
\end{equation}
where $\mathcal S$ is the set of all $n\times n$ submatrices of $[N]\times[N]$. Since $\log|\mathcal S|=(1+o(1))2\alpha n\log n$, this gives the minimax risk to the principal exponential order.

\vspace{0.1cm}

In an alternative interpretation of the detection boundary adopted by 
\cite{butuceasubmatrix2013} (see above Remark~2.1 in the paper), one can first determine the boundary threshold $A^\ast(0, \alpha)$ which changes its form according to \eqref{def:Adelta} around the critical value $\alpha_c = 1/2$ and then vary it with 
respect to the parameter $\delta$ in the manner consistent with this form (see 
below). More precisely, with
\begin{equation}\label{def:AdeltaBI}
A^\ast_{{\rm BI}}(\delta, \alpha) = A^\ast_{{\rm BI}}(\delta, \alpha; n) \stackrel{{\rm def.}}{=} 
    \begin{cases}
        n^{-(1 - \alpha - \delta)}, & \alpha \le 1/2,\\
        \sqrt{\frac{4(1+\delta)\alpha\log n}{n}}, & \alpha > 1/2,
    \end{cases} \qquad \mbox{for } \delta\in(-1,\infty),
\end{equation}
the results of \cite{butuceasubmatrix2013} yield that $\mathcal R_n(A^\ast_{{\rm BI}}(\delta, \alpha)) \to 0/1$ for $\delta \gtrless 0$ respectively. It is a natural question to ask how the 
phase diagrams for these two signal strength profiles are related. Note that we can have 
$A^\ast_{{\rm BI}}(\delta, \alpha) \neq A^\ast(\delta, \alpha)$ for some $\delta > 0$ {\em only if} $\alpha + \delta > 1/2 \ge \alpha$. In Figure~\ref{fig:phase_diagram}, this corresponds to the sector above the diagonal line $\alpha + \delta = 1/2$ in the second quadrant. It turns out 
that a scan test is still rate-optimal in this sub-regime and we establish in part~2 of 
Theorem~\ref{thm:sparse_riskbnd}:
\begin{equation}\label{eq:densesupBIheu}
\log\mathcal R_n(A^\ast_{{\rm BI}}(\delta, \alpha))\sim -\frac18 n^{2(\alpha+\delta)}.
\end{equation}

\smallskip

\noindent\emph{Below the detection boundary.}
When $\delta < 0$, the minimax risk 
tends to 1. Theorems~\ref{thm:dense_below_boundary} and \ref{thm:sparse_below_boundary} show 
that the deficit from 1 is {\em polynomial} rather than (stretched or 
super-)exponentially small. In the dense below-boundary (sub-)regime $\alpha+\delta\le1/2$, 
a sum test statistic achieves the optimal rate like in the dense above-boundary regime 
discussed above (but for a new threshold) and we show in 
Theorem~\ref{thm:dense_below_boundary} that
\begin{equation}\label{eq:densesubheu}
1-\mathcal R_n(A^\ast(\delta, \alpha))\asymp n^{\delta},
\end{equation}
with absolute implicit constants. This would suggest that the optimal rate in the sparse 
below-boundary regime $\alpha + \delta > 1/2$ should come from a scan test as 
in the \sloppy sparse above-boundary regime. Surprisingly, this heuristic turns out to be {\em false} and the same sum test still attains the optimal rate which we 
identify in Theorem~\ref{thm:sparse_below_boundary} as
\begin{equation}\label{eq:sparsesubheu}
1-\mathcal R_n(A^\ast(\delta, \alpha))
    \asymp \sqrt{(1+\delta)\alpha}\, n^{1/2-\alpha}\sqrt{\log n},
\end{equation}
again up to absolute multiplicative constants. In fact, the same rate continues to 
hold for {\em any} $\alpha > 1/2$ and thus \eqref{eq:sparsesubheu} remains valid 
upon replacing $A^\ast(\delta, \alpha)$ with $A^\ast_{{\rm BI}}(\delta, \alpha)$ when $\alpha > 1/2 \ge \alpha + \delta$ which is precisely the region where $A^\ast(\delta, \alpha) \neq A^\ast_{{\rm BI}}(\delta, \alpha)$ for $\delta < 0$ (revisit \eqref{def:AdeltaBI} above and see Figure~\ref{fig:phase_diagram}).

\smallskip

\noindent\emph{On the boundary.}
Finally, Theorem~\ref{thm:criticality} treats a fixed-$n$ very sparse critical regime in which $A^\ast = \sqrt{4\log N/n}$ (note that $\alpha \log n = \log \frac Nn$ for $N = n^{1 + \alpha}$ in \eqref{def:Adelta}). In this case, 
Theorem~\ref{thm:criticality} tells us that
\begin{equation}\label{eq:sparsecritheu}    
\lim_{n \to \infty} \mathcal R_n(A^\ast) = \frac12.
\end{equation}
Thus the critical behavior is genuinely non-degenerate in this scaling. This 
behavior is also markedly {\em different} from the sparse Gaussian 
sequence model and the linear regression model \citep{ingster2012nonparametric, mukherjee2020minimax} where the minimax risk for detecting $s$-sparse signals at 
criticality converges to $\frac1{2^s}$ for fixed $s$. Therefore, whereas the 
minimax risk at criticality for sparse vector signal detection decreases with the 
size of the (sparse) signal, the minimax risk at criticality for detecting fixed-sized sub-matrices does not depend on the submatrix size. We will see in 
\eqref{eq:sparsecritheud} below that the same limiting value $\frac12$ holds for 
the tensor version for the problem as well.

\smallskip

\noindent\emph{Extension to subtensor detection.} One can similarly consider 
the problem of detecting an $n \times \ldots \times n$ dimensional order-$d$ tensor embedded in a larger $N \times \ldots \times N$ tensor for any $d \ge 2$ ($d = 2$ being the matrix case). The corresponding detection boundary was identified in \cite{MR4382029} as stated below:
 \begin{equation}\label{def:Adelta_tensor}
A^\ast_d(\delta, \alpha) = A^\ast_d(\delta, \alpha; n)\stackrel{{\rm def.}}{=}
\min\left\{n^{-\frac d2(1-\alpha-\delta)},
\sqrt{\frac{2d(1+\delta)\alpha\log n}{n^{d-1}}}\right\},
\qquad \delta\in(-1,\infty).
\tag{\ref{def:Adelta}$'$}
\end{equation}
In this case the critical line demarcating the dense/sparse transition becomes 
$\alpha + \delta = 1/d$. Our proofs along with the dichotomy between sum/scan 
mechanism for $d = 2$ carry over, {\em mutatis mutandis}, to any general $d$. 
Therefore, to keep our exposition uncluttered, we only state (and prove) our main results in 
the upcoming section(s) for matrices while recording the results for the general 
order-$d$ case below

In the dense above-boundary regime, namely $\alpha + \delta \le 1/d$ and $\delta > 
0$, the minimax risk behaves like 
\begin{equation}\label{eq:densesupheud}
\log\mathcal R_n(A^\ast_d(\delta, \alpha)) \sim -\frac18 n^{d\delta}
\tag{\ref{eq:densesupheu}$'$}
\end{equation}
where, by slightly abusing the notation, we continue to use $\mathcal R_n( \cdot )$ for the minimax risk of the general testing problem. On the other hand, in the sparse above-boundary 
regime given by $\alpha + \delta > 1/d$ and $\delta > 0$, we have
\begin{equation}\label{eq:sparsesupheud}
\log\mathcal R_n(A^\ast_d(\delta, \alpha)) \sim -\frac{\delta^2}{4(1+\delta)} \log |\mathcal S_d|,    
\tag{\ref{eq:sparsesupheu}$'$}
\end{equation}
where $\mathcal S_d$ denotes the set of all $n\times \ldots \times n$ order-$d$ 
subtensors of $[N] \times \ldots \times [N]$. This is identical to the matrix, i.e., $d = 2$ case except that now we have $\log|\mathcal S_d|=(1+o(1))d\alpha n\log n$. 

We can also define a signal strength function $A^\ast_{d, {\rm BI}}(\delta, \alpha)$ 
analogous to \eqref{def:AdeltaBI}. Similar to the matrix version (with $d$ in place of 2), we have $A^\ast_{d, {\rm BI}}(\delta, \alpha) \ne A^\ast_{d}(\delta, \alpha)$ for some $\delta > 0$ only if $\alpha + \delta > 1/d \ge \alpha$, in which case 
\begin{equation}\label{eq:densesupBIheud}
\log\mathcal R_n(A^\ast_d(\delta, \alpha))\sim -\frac18 n^{d(\alpha+\delta)}.
\tag{\ref{eq:densesupBIheu}$'$}
\end{equation}

\vspace{0.15cm}

Below the detection boundary, the risks again exhibit power-law decay around the limiting 
value 1. In the dense below-boundary regime $\alpha + \delta \le 1/d$, we get
\begin{equation}\label{eq:densesubheud}
1-\mathcal R_n(A^\ast_d(\delta, \alpha))\asymp n^{d\delta/2}
\tag{\ref{eq:densesubheu}$'$}
\end{equation}
while for all $\alpha > 1/d$, we have
\begin{equation}\label{eq:sparsesubheud}
1-\mathcal R_n(A^\ast_d(\delta, \alpha))
    \asymp \sqrt{(1+\delta)\alpha}\, n^{1/2 - d\alpha/2}\sqrt{\log n}.
\tag{\ref{eq:sparsesubheu}$'$}    
\end{equation}
The multiplicative constants implicit in ``$\asymp$'' involve $d$ for both the 
bounds.

\vspace{0.1cm}

Finally, in the fixed-$n$ sparse critical regime, the detection threshold is 
$A^\ast_d = \sqrt{2d\log N/n}$ and 
\begin{equation}\label{eq:sparsecritheud}    
\lim_{n \to \infty} \mathcal R_n(A^\ast_d) = \frac12.
\tag{\ref{eq:sparsecritheu}$'$} 
\end{equation}
Thus the critical limiting minimax risk in the fixed-size setting neither depends on $n$ {\em nor} on the order $d$ of the tensor.

\subsection{Proof overview}\label{subsec:overview}
Let
\begin{equation}\label{def:Sfamily}
\mathcal S = \mathcal S(n; \alpha) \stackrel{{\rm def.}}{=}
\{\cR\times\cC\subset[N]\times[N]: |\cR|=|\cC|=n\}
\end{equation}
and for $S\in\mathcal S$ or, more generally, $S \subset [N] \times [N]$, write
\begin{equation}\label{def:XS}
X_S\stackrel{{\rm def.}}{=}\sum_{(i,j)\in S}X_{ij}.    
\end{equation}
The upper bounds on the minimax risks arise from two different types of test statistics, namely 
\begin{equation}\label{def:sumscantest}
T_{{\rm sum}}^H  \stackrel{{\rm def.}}{=} \mathbf{1}_{\{X_{[N]\times[N]} > H\}} \,\, \mbox{ and } \,\, T_{{\rm scan}}^H \stackrel{{\rm def.}}{=} \mathbf{1}_{\big\{\max\limits_{S\in \mathcal{S}} \frac{X_{S}}n > H \big\}}
\end{equation}
referred to as the {\em sum} test and the {\em scan} test respectively w.r.t. the threshold $H$ (with implicit dependence on $n$). The same statistics were also used 
for constructing ``good'' tests in \cite{butuceasubmatrix2013}. For any choice of 
the signal strength $A$, computing the optimal value of $H = H_n(A)$ that minimizes the maximum risk for either of 
these tests is rather straightforward and usually involves, as in the case of $T_{{\rm scan}}$, a standard first-moment 
computation. Thus we can generate ``reasonable'' upper bounds on the minimax risk in this manner with an important caveat that the choice between the two types of tests 
may be different in the below-boundary regime (where all tests are ``bad'', see below) from its above-boundary counterpart. The main and the much more difficult 
part is to show that these rates are optimal over all possible tests.

\vspace{0.15cm}

A classical lower bound on the minimax risk is given by the minimum {\em Bayes risk} under some suitable prior $p$ on the parameter space $\Theta(A) = \Theta(A, n)$ (recall \eqref{def:Theta}). By the Neyman-Pearson lemma, the minimum Bayes 
risk is attained by the corresponding likelihood ratio test, i.e., the statistic $\mathbf{1}_{\{L_{p} > 1\}}$ where $L_{p}$ denotes the integrated likelihood ratio under $p$. In view of the obvious ``monotonicity'' of the probability 
measures indexed by $\Theta(A)$ with respect to $(\theta_{ij})_{1 \le i, j \le N}$, 
we can choose, as $p$, the uniform prior $\bpi$ on the boundary of $\Theta(A)$, i.e., the set
\begin{equation}\label{def:partialTheta}
    \partial \Theta(A) \stackrel{{\rm def.}}{=} \big\{\btheta:\exists\ \cR\subset[N],\ \cC\subset[N],\ |\cR| = n,\ |\cC| = n,
\theta_{ij} = A\mathbf 1_{(i,j)\in \cR\times\cC}\big\}
\end{equation}
(cf.~\eqref{def:Theta}). The corresponding integrated likelihood ratio (recall \eqref{def:Xdist}; see also \eqref{eq:Lpiform}) is
\begin{equation}\label{def:Lpi}
L_{\bpi}=\frac1{|\mathcal S|}\sum_{S\in\mathcal S}
\exp\left(A X_S-\frac{A^2 n^2}{2}\right).
\end{equation}
The core lower-bound problem is to estimate the lower tail of this likelihood 
ratio under $\P_0$.

One obvious problem that one faces in analyzing \eqref{def:Lpi} is that the summands 
comprising $L_{\bpi}$ are far from being independent. Indeed, the second moment of 
$L_{\bpi}$ is governed by overlaps and can be expressed as
\begin{equation*}
\E_0(L_{\bpi}^2)  = \frac1{|\mathcal S|} \sum_{1 \le k, \ell \le n} \exp(A^2k\ell) N_{k, \ell}
\end{equation*}
where $N_{k, \ell}$ denotes the number of submatrices intersecting a given submatrix 
with number of common rows and columns $k$ and $\ell$ respectively. In most 
situations of interest, this second moment blows up. To remedy this, 
\cite{butuceasubmatrix2013} introduces a ``blanket'' truncation of $L_{\bpi}$; see 
\eqref{def:LpiE} below for a generic form of such truncations. The truncated second 
moment typically looks like
\begin{equation}\label{eq:trunc_secmom}
\frac1{|\mathcal S|} \sum_{1 \le k, \ell \le n} \exp( q(k\ell, A) ) \, N_{k, \ell}
\end{equation}
where the function $q(k\ell, A) = q(k\ell, A; n)$ depends on the underlying 
truncation. The truncation in \cite{butuceasubmatrix2013} allows the authors to show 
that the limiting minimax risk is 1 in the below-boundary regime. However, we cannot
hope to capture the sharp rates and their complex phase transition as revealed by 
Figure~\ref{fig:phase_diagram} with a {\em single} choice of truncation. Before 
elaborating on this further, let us note that \eqref{eq:trunc_secmom} can be 
interpreted as a {\em partition function} for a Gibbs measure on the overlap 
patterns $(k, \ell)$ with $q(k\ell, A)$ contributing the energy terms. 
This interpretation is not merely speculative. Indeed, in the above-boundary regime, 
our sharp exponential rates correspond to the {\em free energy} for such energy-entropy pairing. We now comment very briefly on the proofs for each of the four 
phases in Figure~\ref{fig:phase_diagram}.

\vspace{0.15cm}

\noindent{{\em Dense above-boundary regime}.} In this regime, the upper bound follows 
from a sum test. For the lower bound, we apply a Cauchy--Schwarz/Paley--Zygmund type 
argument to $L_{\bpi}$ restricted to a typical event for the sum statistic. As hinted above, the exponent $n^{2\delta}/8$ is produced by a free energy computation 
where the entropy-energy functional $q(k\ell, A) + \log N_{k, \ell}$ (cf.~\eqref{eq:trunc_secmom}) is maximized at essentially typical overlaps of order 
$n^2/N$.

\vspace{0.1cm}

\noindent{{\em Sparse above-boundary regime}.} In this case, the optimal upper bound is obtained by a scan statistic. For the lower bound, we first compare the 
likelihood ratio to the maximum of $X_S$ over $S\in\mathcal S$. The problem is then 
reduced to a second-moment estimate for the number of exceedances of a ``high level'' by $X_S; S \in \mathcal S$. A careful free energy-type computation then 
shows that the dominant contribution has exponent $\frac{\delta^2}{4(1+\delta)}$ on 
the scale $\log|\mathcal S|$.

\vspace{0.1cm}

\noindent{{\em Dense and sparse below-boundary regime}.} Below the boundary, the object of interest is the small difference $1-\mathcal R_n(A)$. Interestingly, the upper bounds in both dense and sparse regimes 
come from the same sum test while the lower bounds use 
Lemma~\ref{lem:risk_lowerbnd_ab}, a general lower bound in terms of first and second 
moments of truncated-likelihood with very different choices of truncation. A crucial 
difference from the above-boundary phase is that the ``free energy principle'' no 
longer holds and the variance of the truncated-likelihood decays at a much slower 
power-law rate contributed by several terms rather than a single dominant term. This 
is visible in the polynomial rates for $1 - \mathcal R_n(A)$. The proof for the 
sparse case is particularly delicate.

\vspace{0.1cm}

\noindent{{\em Critical fixed-size regime}.} The result in the critical regime uses 
a truncated likelihood ratio similar to the sparse below-boundary regime and shows that, for fixed $n$, all non-identical overlap pairs become negligible while the 
truncation removes exactly half of the planted mass, yielding the limit $1/2$.

\subsection{Organization} Section~\ref{sec:main_results} states the main theorems.  Section~\ref{sec:discussions} discusses related problems, adaptation, computational considerations, and future directions. Section~\ref{sec:prelim} collects combinatorial estimates, normal tail bounds, and the truncated-likelihood lower bound. Sections~\ref{sec:dense_above}--\ref{sec:criticality} contain the proofs of the main results.

\subsection{Notation} We write $c,c',C,C'$ for finite positive constants whose 
values may change from line to line. Unless indicated otherwise, constants are 
absolute; dependence on parameters such as $\alpha > 0$ and $\delta \in \R$ (see \eqref{eq:mnregime} and \eqref{def:Adelta}) is displayed when relevant. We will also suppress the dependence of a specific test $T_n$ as well as the parameter space $\Theta(A, n)$ (cf.~\eqref{def:Theta}) on the underlying scale parameter 
$n$, as the latter would be clear from the context. We use the notation $[M]$ for 
the set of integers $\{1,\ldots, M\}$ for any positive integer $M$. The limits as 
$n \to \infty$ are always taken with $\alpha$ and $\delta$ fixed. In the same 
vein, we use $o(1)$ to denote any function of $n$ (depending possibly on $\alpha$ 
and $\delta$) that tends to $0$ as $n \to \infty$ with $\alpha$ and $\delta$ 
fixed. We denote the joint distribution of $\bX$ under the mean matrix $\btheta \in \R^{N \times N}$ as $\P_{\btheta}$ and the corresponding expectation, variance 
as $\E_{\btheta}, \mathrm{Var}_{\btheta}$ respectively (see \eqref{def:Theta}--\eqref{eqn:bayes_risk}). When $\btheta = \mathbf 0$, i.e., the mean matrix 
corresponding to the null distribution, we use the (slightly) simplified notations 
$\P_0, \E_0$ and ${\rm Var}_0$.

\section{Main Results} \label{sec:main_results}
We now state the precise risk asymptotics. Recall the notation \sloppy $\mathcal R_n(A)=\inf_T\mathrm{Risk}(T,A,n)$ and the family $\mathcal S$ of $n\times n$ 
submatrices of $[N]\times[N]$ from \eqref{def:Sfamily}.

\subsection{Above-boundary regime $(\delta>0)$}\label{subsec:main_above_bnd}
The first theorem concerns the dense side of the phase diagram where $\alpha + \delta \le 1/2$. In this part, the signal 
strength $A^\ast(\delta, \alpha)$ as given by \eqref{def:Adelta} is the algebraic rate $n^{-(1-\alpha-\delta)}$. 
\begin{theorem}[Dense above-boundary regime]\label{thm:dense_above_boundary}
Let $\alpha + \delta \leq 1/2$ and $\delta>0$. 
Then
\begin{equation}\label{eq:dense_riskbnd1}
\lim_{n\to\infty}
\frac{\log \mathcal R_n(A^\ast(\delta, \alpha))}{n^{2\delta}}
=-\frac18.
\end{equation}
\end{theorem}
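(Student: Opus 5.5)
The upper bound will come from the sum test $T^H_{\rm sum}$ of \eqref{def:sumscantest}, and the lower bound from a Bayes-risk argument under the uniform prior $\bpi$ on $\partial\Theta(A)$. Throughout, $A=A^\ast(\delta,\alpha)=n^{-(1-\alpha-\delta)}$, since we are in the dense regime $\alpha+\delta\le 1/2$. Write $Z=X_{[N]\times[N]}/N$. Under $\P_0$, $Z\sim N(0,1)$. Under any $\btheta\in\Theta(A)$, $Z\sim N(\mu,1)$ with $\mu\ge An^2/N=(1+o(1))n^{\delta}$, and $\mu=An^2/N$ exactly on $\partial\Theta(A)$.

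\emph{Upper bound.} Take the threshold at half the signal, $H=\tfrac12 An^2$, i.e.\ reject when $Z>\tfrac12 An^2/N$. Then
\[
\mathrm{Risk}(T^H_{\rm sum})\le 2\bar\Phi\bigl(\tfrac12(1+o(1))n^{\delta}\bigr)=\exp\bigl(-(1+o(1))n^{2\delta}/8\bigr).
\]

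\emph{Lower bound: decomposition.} It suffices to show $\int\min(d\P_0,d\P_{\bpi})\ge \exp(-(1+o(1))n^{2\delta}/8)$. Split $\bX$ into its projection on the all-ones direction, namely $Z$, and the orthogonal part $\mathbf Y$, which is independent of $Z$ under $\P_0$. Completing squares in \eqref{def:Lpi} gives the factorization
\[
L_{\bpi}=\exp\Bigl(\tfrac{An^2}{N}Z-\tfrac{A^2n^4}{2N^2}\Bigr)\,L',\qquad
L'=\frac1{|\mathcal S|}\sum_{S\in\mathcal S}\exp\Bigl(AY_S-\tfrac{A^2}{2}\bigl(n^2-\tfrac{n^4}{N^2}\bigr)\Bigr),
\]
where $L'$ is a function of $\mathbf Y$ only. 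Let $E=\{|Z-\tfrac12 An^2/N|\le\varepsilon n^{\delta}\}$. On $E$ the first factor is at least $e^{-2\varepsilon n^{2\delta}}$, and $\P_0(E)\ge\exp(-(1+O(\varepsilon))n^{2\delta}/8)$. By independence of $Z$ and $\mathbf Y$,
\[
\int\min(d\P_0,d\P_{\bpi})\ \ge\ \P_0(E)\,\P_0\bigl(L'\ge \tfrac12\bigr)\,e^{-2\varepsilon n^{2\delta}}\cdot\tfrac12 .
\]
So it remains to prove $\P_0(L'\ge\frac12)\ge \exp(-o(n^{2\delta}))$ and then let $\varepsilon\downarrow0$.

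\emph{Lower bound: second moment of $L'$.} Since $\E_0L'=1$, Paley--Zygmund reduces the claim to $\E_0 L'^2\le\exp(o(n^{2\delta}))$. A direct computation gives
\[
\E_0L'^2=\E\exp\bigl(A^2(KM-n^4/N^2)\bigr),
\]
where $K$ and $M$ are independent hypergeometric overlaps (rows and columns) with mean $k_0=n^2/N$. This is the free-energy computation and is the main obstacle; I would split it into three ranges.

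For typical overlaps, $KM-k_0^2\approx k_0(\Delta K+\Delta M)+\Delta K\Delta M$ with $\mathrm{Var}(K)\approx n^2/N$. A Gaussian heuristic gives an exponent of order
\[
A^4k_0^2\,\frac{n^2}{N}=n^{2\delta}\cdot\frac{A^2n^2}{N}=n^{2\delta}\,n^{2\delta+\alpha-1}=o(n^{2\delta}),
\]
since $2\delta+\alpha<1$.

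For moderate deviations, I would use hypergeometric tail bounds from Section~\ref{sec:prelim} of the form $\P(K\ge k)\le (en^2/(Nk))^k$ and check that the entropy term $k\log(Nk/n^2)$ beats the energy term $A^2kM$ whenever $kM\gg k_0^2$. The condition $A^2n=n^{-1+2\alpha+2\delta}\le 1$ (again from $\alpha+\delta\le1/2$) is what should make this work, even up to $k,M$ of order $n$.

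If this bound fails in the extreme range $KM$ near $n^2$, I would truncate $L'$ there, in the style of \eqref{def:LpiE}: discard the terms with atypically large $Y_S$. The truncation costs only a $1-o(1)$ fraction of $\E_0L'$ and removes the blow-up.
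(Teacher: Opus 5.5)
Your proposal is correct, and its architecture differs from the paper's. The paper lower-bounds $\P_0(L_{\bpi}>1)$ by a Cauchy--Schwarz inequality truncated to the acceptance event $\{T=0\}$ of the sum test. It first uses a dichotomy on whether $\P_{\bpi}(L_{\bpi}\le 1)\ge\frac12\Phi(-\tfrac12 n^{\delta})$. The truncated second moment is then bounded by $\Phi(-\tfrac32 n^{\delta})\,\E e^{A^2KM}$, and the exponents $-\tfrac14+\tfrac98-1=-\tfrac18$ balance only because $\E e^{A^2KM}\le n^2\exp((1+o(1))n^{2\delta})$ with sharp constant $1$. Your projection onto the all-ones direction makes that cancellation structural. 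Indeed $\E_0 L_{\bpi}^2=e^{\mu^2}\,\E_0 L'^2$ with $\mu^2=A^2k_0^2=(1+o(1))n^{2\delta}$, so the $Z$-coordinate carries exactly the sum-test exponent. Independence replaces the dichotomy, and Paley--Zygmund is applied to a mean-one variable with subexponential second moment.

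The analytic core is the same estimate in both routes, namely $\E e^{A^2(KM-k_0^2)}=e^{o(n^{2\delta})}$. This is exactly what the paper's Claim in \S\ref{subsec:dense_above} (via \eqref{eq:factorial_dense}) delivers, so you may simply quote it. As you wrote it, though, your own treatment is only heuristic in the bulk. The union bound $\P(K\ge k)\le(ek_0/k)^k$ is valid for all $N\ge n$; note that \eqref{eq:factorial} needs $\alpha\ge\frac12$ and is unavailable here. That bound is vacuous for $k\le ek_0$, and a Gaussian heuristic does not control an exponential moment.

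A short rigorous route is Hoeffding's comparison $\E e^{tM}\le\exp(k_0(e^{t}-1))$, as in the proof of Theorem~\ref{thm:dense_below_boundary}. Since $A^2K\le A^2n\le1$, it gives $\E e^{A^2KM}\le\E\exp(k_0A^2K(1+A^2K))$. Split at $K=\eta/A^2$, which is $\gg k_0$ because $\alpha+2\delta<1$. On the lower part use the comparison again; on the upper part use the union tail bound. This yields $\E e^{A^2KM}\le\exp((1+\eta)(1+o(1))n^{2\delta})+1$, hence $\log\E_0L'^2=o(n^{2\delta})$ after $\eta\downarrow0$. Your truncation fallback is therefore unnecessary.

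Your route also buys something the paper's does not. Take $E=\{\mu/2\le Z\le\mu/2+1/\mu\}$, on which the first factor is at least $1$. This gives $\mathcal R_n(A)\ge c\,\Phi(-\mu/2)/\E_0L'^2$. So wherever $\E_0L'^2$ stays bounded, the risk is pinned to $\Phi(-\mu/2)$ up to absolute constants, not merely on the logarithmic scale.
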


The next theorem covers the sparse scan-dominated sub-regime. We state it for $\alpha + \delta > 1/2$, where this is the Butucea--Ingster sparse regime, but the proof gives the same exponent for {\em every} fixed $\alpha > 0$ if the sparse regime signal strength $A^\ast(\delta, \alpha) = \sqrt{4(1+\delta)\alpha\log n / n}$ (recall \eqref{def:Adelta}) is used. We also give the asymptotic rate corresponding to the signal strength $A^\ast_{{\rm BI}}(\delta, \alpha) = n^{-(1 - \alpha -\delta)}$ for 
$\alpha + \delta > 1/2 \ge \alpha$ which defines the region where $A^\ast_{{\rm BI}}(\delta, \alpha)$ and $A^\ast(\delta, \alpha)$ are not equal for $\delta > 0$ (see \eqref{def:AdeltaBI} and the surrounding discussion in \S\ref{subsec:summary}). 
\begin{theorem}[Sparse above-boundary regime]\label{thm:sparse_riskbnd}
Let $\alpha + \delta > 1/2$ and $\delta > 0$.
\begin{enumerate}
\item We have
\begin{equation}\label{eq:sparse_riskbnd}
\lim_{n\to\infty}
\frac{\log\mathcal R_n(A^\ast(\delta, \alpha))}{\log|\mathcal S|}
=-\frac{\delta^2}{4(1+\delta)}
\end{equation}
and the same conclusion holds for all $\alpha > 0$ and the same choice of signal 
strength.
\item If also $\alpha \le 1/2$, then
\begin{equation}\label{eq:dense_riskbnd2}
\lim_{n\to\infty}
\frac{\log \mathcal R_n(A^\ast_{{\rm BI}}(\delta, \alpha))}{n^{2(\alpha+\delta)}}
=-\frac18.
\end{equation}
\end{enumerate}
\end{theorem}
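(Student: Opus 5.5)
Both parts follow the same pattern: an upper bound from the scan test $T_{\rm scan}^H$ of \eqref{def:sumscantest}, and a matching lower bound from the uniform prior $\bpi$ on $\partial\Theta(A)$ (see \eqref{def:partialTheta}), whose likelihood ratio $L_{\bpi}$ is given in \eqref{def:Lpi}.

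\emph{Upper bound.} Write $A=A^\ast(\delta,\alpha)=\sqrt{4(1+\delta)\alpha\log n/n}$. Under any $\btheta\in\Theta(A)$ the planted set $S_0$ satisfies $X_{S_0}/n\ge An+Z$ with $Z\sim N(0,1)$. Under $\P_0$, each $X_S/n$ is $N(0,1)$, so a union bound gives $\P_0(\max_S X_S/n>H)\le|\mathcal S|e^{-H^2/2}$.

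Set $H=\sqrt{2(1+\eta)\log|\mathcal S|}$. Using $\log|\mathcal S|=(1+o(1))2\alpha n\log n$, we have $An=\sqrt{(1+\delta)\cdot 2\log|\mathcal S|}\,(1+o(1))$. The total risk is then at most
\[
e^{-\eta\log|\mathcal S|}+e^{-(\sqrt{1+\delta}-\sqrt{1+\eta})^2\log|\mathcal S|}.
\]
Balancing the two exponents, $\eta=(\sqrt{1+\delta}-\sqrt{1+\eta})^2$, yields $\sqrt{1+\eta}=(2+\delta)/(2\sqrt{1+\delta})$. Hence $\eta=\delta^2/(4(1+\delta))$, which matches \eqref{eq:sparse_riskbnd}. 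This computation uses nothing about $\alpha$, which is why the result holds for all $\alpha>0$.

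For part 2, $An=n^{\alpha+\delta}$ with $\alpha+\delta>1/2$, so $(An)^2\gg\log|\mathcal S|$. Taking $H=An/2$, the null term $|\mathcal S|e^{-(An)^2/8}$ and the alternative term $e^{-(An)^2/8}$ are both $e^{-(1+o(1))n^{2(\alpha+\delta)}/8}$.

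\emph{Lower bound.} The minimax risk is at least the Bayes risk $\E_0\min(1,L_{\bpi})$. Since $\E_0\min(1,L)\ge\P_0(L\ge1)$, and since $L_{\bpi}\ge|\mathcal S|^{-1}\exp(A\max_SX_S-A^2n^2/2)$, it suffices to lower bound $\P_0(\max_S X_S/n\ge t)$ at a level $t$ with $Ant-A^2n^2/2\ge\log|\mathcal S|$.

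This requirement gives $t=An/2+\log|\mathcal S|/(An)$. With $An=\sqrt{2(1+\delta)L}$ and $L=\log|\mathcal S|$, this equals $\sqrt{2L}\cdot\frac{2+\delta}{2\sqrt{1+\delta}}=\sqrt{2(1+\eta)L}$, with the same $\eta$ as in the upper bound.

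So the key step is the large-deviation lower bound
\[
\P_0\Big(\max_{S}X_S/n\ge\sqrt{2(1+\eta)L}\Big)\ge e^{-(1+o(1))\eta L}.
\]
I would prove it by a second moment method on the exceedance count $Z=\sum_S\mathbf 1\{X_S/n\ge t\}$, applied under a tilted measure, or directly via Paley--Zygmund: $\P(Z>0)\ge(\E Z)^2/\E Z^2$. Here $\E Z=|\mathcal S|\bar\Phi(t)=e^{-(\eta+o(1))L}$. A pair of submatrices with overlap $(k,\ell)$ has correlation $\rho=k\ell/n^2$, so
\[
\E Z^2=\sum_{k,\ell}|\mathcal S|N_{k,\ell}\,\P(\text{both exceed }t)\approx\sum_{k,\ell}|\mathcal S|N_{k,\ell}\,e^{-t^2/(1+\rho)}.
\]
The goal is to show that $\E Z^2\le e^{o(L)}\max(\E Z,(\E Z)^2)$. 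Since $\E Z<1$, what is needed is $\E Z^2\le e^{o(L)}\E Z$. This amounts to showing that the diagonal term $k=\ell=n$ dominates the free-energy functional
\[
\log N_{k,\ell}-\frac{t^2}{1+\rho}+\frac{t^2}{2},
\]
up to $o(L)$. For $\ell\le k$ I would use the bound $\log N_{k,\ell}\lesssim(n-k)\log(N/n)+(n-\ell)\log(N/n)+\ldots$ from the combinatorial estimates of Section~\ref{sec:prelim}, together with $t^2=2(1+\eta)L\approx 4(1+\eta)\alpha n\log n$. With the overlap fractions $x=k/n$ and $y=\ell/n$, the exponent per unit $\alpha n\log n$ becomes roughly
\[
(2-x-y)-4(1+\eta)\Big(\frac{1}{1+xy}-\frac12\Big).
\]
I would then check that this is at most $0$ everywhere, with value $0$ at $x=y=1$, since $4(1+\eta)\ge4$. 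Establishing this concave/convex comparison, especially near $x=y=1$ and at small overlaps where the entropy is largest, is the main obstacle. If the plain count fails, I would fall back on an exponentially tilted measure, or truncate $Z$ to the event $\{X_{S'}\le\ldots\}$ on subsets to kill the atypical overlaps.

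For part 2 the same scheme applies. Now $An=n^{\alpha+\delta}$ and $t=An/2+L/(An)=(1+o(1))An/2$, so $t^2/2\gg L$. The single-set term gives $\E Z\ge e^{-(1+o(1))(An)^2/8}$. Moreover the energy term $t^2(\frac12-\frac1{1+\rho})$ dominates the entropy for every $\rho<1$, so the diagonal term again controls $\E Z^2$. This yields \eqref{eq:dense_riskbnd2}.
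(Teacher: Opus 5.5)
Your proposal is correct and is essentially the paper's proof. The upper bound uses the scan test with threshold $\sqrt{2\tau^\ast\log|\mathcal S|}$, where $\tau^\ast=1+\eta=\frac{(2+\delta)^2}{4(1+\delta)}$. The lower bound reduces to $\P_0\bigl(\max_S X_S/n > \tfrac{An}{2}+\tfrac{\log|\mathcal S|}{An}\bigr)$, followed by Paley--Zygmund on the exceedance count with Corollary~\ref{cor:factorial} bounding $N_{k,\ell}$.

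The inequality you flag as the main obstacle is a one-line check, exactly as in the paper. Use $x+y\ge 2\sqrt{xy}$ and set $s=\sqrt{xy}$; your exponent is then at most $2(1-s)\bigl(1-(1+\eta)\tfrac{1+s}{1+s^2}\bigr)\le 0$ on $[0,1]$, so no tilting or extra truncation is needed. In part~2, even the single-set bound $\P_0(X_S/n>t)$ suffices, since there $\log|\mathcal S|=o(t^2)$.
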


Since $\log|\mathcal S|=(1+o(1)) \, 2\alpha n\log n$ (see \eqref{eq:Scardinality} below), part~1 of Theorem~\ref{thm:sparse_riskbnd} determines the minimax risk up 
to the principal exponential order. The proof of part~2 is essentially a 
corollary of the proof of part~1.

\subsection{Below-boundary regime $(\delta < 0)$}\label{subsec:main_below_bnd}
Below the detection boundary, the minimax risk tends to 1. The following two theorems quantify the size of the quantity $1-\mathcal R_n(A^\ast(\delta, \alpha))$. The first result below characterizes the case when $\alpha+\delta\leq\frac{1}{2}$ -- which can arise when $\alpha\leq \frac{1}{2}$ or $\alpha>\frac{1}{2}$ but $\delta$ negative enough. 
\begin{theorem}[Dense below-boundary regime]\label{thm:dense_below_boundary}
Let $\alpha+\delta\le1/2$ and $\delta < 0$. Then
\begin{equation}\label{eq:dense_below_boundary0}
\lim_{n\to\infty}
\frac{\log(1-\mathcal R_n(A^\ast(\delta, \alpha)))}{\log n} =\delta.
\end{equation}
More precisely, there exist absolute constants $c,C\in(0,\infty)$ and $n_0(\alpha,\delta)<\infty$ such that, for all $n\ge n_0(\alpha,\delta)$,
\begin{equation}\label{eq:dense_below_boundary}
    c\le
    \frac{1-\mathcal R_n(A^\ast(\delta, \alpha))}{n^\delta}
    \le C.
\end{equation}
\end{theorem}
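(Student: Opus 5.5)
The argument splits into the two inequalities in \eqref{eq:dense_below_boundary}. Throughout, $A=A^\ast(\delta,\alpha)=n^{-(1-\alpha-\delta)}$, since $\alpha+\delta\le 1/2$. Two facts about this regime are used repeatedly: $A^2n=n^{2(\alpha+\delta)-1}\le 1$, and $A^2 n^2/N \asymp n^{\alpha+2\delta-1}\to 0$ (because $\alpha+2\delta<\alpha+\delta\le 1/2$). The limit \eqref{eq:dense_below_boundary0} then follows from \eqref{eq:dense_below_boundary} by taking logarithms.

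\emph{Lower bound $1-\mathcal R_n\ge c\,n^\delta$.} I would use the sum test $T^H_{\rm sum}$ from \eqref{def:sumscantest}.
\begin{itemize}
\item Under $\P_0$, $X_{[N]\times[N]}\sim N(0,N^2)$.
\item Under any $\btheta\in\Theta(A)$, the statistic is normal with variance $N^2$ and mean at least $An^2$. By monotonicity it suffices to treat mean exactly $An^2$.
\item The standardized shift is $An^2/N=(1+o(1))\,n^{\delta}\to 0$.
\end{itemize}
Take $H=An^2/2$. The risk is then $2\bar\Phi(n^\delta/2)\cdot(1+o(1))$-close to $1-\frac{2}{\sqrt{2\pi}}\cdot\frac{n^\delta}{2}(1+o(1))$. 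This gives $1-\mathrm{Risk}(T^H_{\rm sum})\ge c\,n^\delta$ with an absolute constant $c$ for $n\ge n_0(\alpha,\delta)$.

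\emph{Upper bound $1-\mathcal R_n\le C n^\delta$.} Let $\bpi$ be the uniform prior on $\partial\Theta(A)$ from \eqref{def:partialTheta}. Then
\[
1-\mathcal R_n\le d_{\rm TV}(\P_0,\P_{\bpi})\le \tfrac12\sqrt{\E_0 L_{\bpi}^2-1}.
\]
I expect that no truncation is needed in this regime and would try the plain second moment first. One has $\E_0L_{\bpi}^2=\E\exp(A^2KL)$, where $K$ and $L$ are independent hypergeometric variables (the row and column overlaps of two independent uniform $n$-subsets of $[N]$), each with mean $n^2/N$.

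The steps for bounding this expectation are as follows.
\begin{enumerate}
\item Condition on $K$. By Hoeffding's comparison of sampling without and with replacement (convex ordering), replace $L$ by a $\mathrm{Bin}(n,p)$ variable with $p=n/N$. This gives
\[
\E[e^{A^2KL}\mid K]\le\bigl(1+p(e^{A^2K}-1)\bigr)^n\le \exp\bigl(np(e^{A^2K}-1)\bigr).
\]
\item Since $A^2K\le A^2n\le 1$, we have $e^{A^2K}-1\le e\,A^2K$. Hence $\E_0L_{\bpi}^2\le \E\exp(e\,npA^2K)$.
\item Apply the same binomial domination to $K$. With $t=e\,npA^2\le e\,A^2n\le e$ bounded,
\[
\E e^{tK}\le\exp\bigl(np(e^t-1)\bigr)\le \exp(C\,np\,t).
\]
\item Here $np\,t\asymp A^2n^4/N^2\asymp n^{2\delta}\to 0$. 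Therefore $\E_0L_{\bpi}^2-1\le C n^{2\delta}$, and so $1-\mathcal R_n\le C n^{\delta}$.
\end{enumerate}

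The main obstacle is keeping the constants absolute, uniformly up to the edge $\alpha+\delta=1/2$, where $A^2n=1$ exactly. The bound $e^{A^2K}-1\le eA^2K$ relies on $A^2K\le 1$, and I have checked that this still holds at the edge. What remains to check is that the linearization $e^x-1\le Cx$ and the final $e^y-1\le 2y$ are applied only where their arguments are bounded. If the crude binomial step were to lose too much when $A^2n=1$, I would fall back on Lemma~\ref{lem:risk_lowerbnd_ab} with the truncation $\{X_S\le A n^2+\text{(small)}\}$. I would treat overlaps $k\ell\lesssim n^{2(1-\alpha)}$ exactly as above, and control the large-overlap tail separately using $\P(K\ge k)\le (en/(Nk)\cdot n)^k$ against $e^{A^2k\ell}\le e^{k}$.
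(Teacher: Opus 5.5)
Your proposal is correct and follows the paper's proof essentially step for step. For $1-\mathcal R_n\ge c\,n^\delta$ you use the sum test; the paper takes threshold $H=0$ rather than your $H=An^2/2$, and both give order $n^\delta$. For $1-\mathcal R_n\le C n^\delta$ you use the untruncated second moment, two rounds of Hoeffding's hypergeometric-to-binomial convex ordering, and the linearization $e^{A^2K}-1\le CA^2K$, valid because $A^2n\le 1$; this is exactly the paper's argument, so the fallback truncation you mention is never needed.
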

Next, we move on to the sparse sub-regime, i.e., $\alpha + \delta > 1/2$ which implies 
that $\alpha > 1/2$ since $\delta < 0$. Somewhat like the part~1 of Theorem~\ref{thm:sparse_riskbnd}, the rate given by Theorem~\ref{thm:dense_below_boundary} below holds for all $\alpha > 1/2$ provided 
the sparse regime signal strength $A^\ast(\delta, \alpha) = 
\sqrt{4(1+\delta)\alpha\log n / n}$ is used. Now note from \eqref{def:AdeltaBI} that $A^\ast_{{\rm BI}}(\delta, \alpha) = \sqrt{4(1+\delta)\alpha\log n / n}$ when $\alpha > 1/2 \ge \alpha + \delta$. But this is also the region where $A^\ast_{{\rm BI}}(\delta, \alpha)$ and $A^\ast(\delta, \alpha)$ are different for $\delta < 0$. 
Thus Theorem~\ref{thm:dense_below_boundary} covers the analog of Theorem~\ref{thm:sparse_riskbnd}, part~2 in this regime.
\begin{theorem}[Sparse below-boundary regime]\label{thm:sparse_below_boundary}
Suppose that $\alpha>1/2$, $\delta\in(-1,0)$ and $A(\delta, \alpha) = \sqrt{4(1+\delta)\alpha\log n / n}$. Then
\begin{equation}\label{eq:sparse_below_boundary0}
\lim_{n\to\infty}
\frac{\log(1-\mathcal R_n(A(\delta, \alpha)))}{\log n}
=\frac12-\alpha.
\end{equation}
Moreover, there exist absolute constants $c,C\in(0,\infty)$ and $n_0(\alpha,\delta)<\infty$ such that, for all $n\ge n_0(\alpha,\delta)$,
\begin{equation}\label{eq:sparse_below_boundary}
    c\le
    \frac{1-\mathcal R_n(A(\delta, \alpha))}{\sqrt{(1+\delta)\alpha}\,n^{1/2-\alpha}\sqrt{\log n}}
    \le C.
\end{equation}
\end{theorem}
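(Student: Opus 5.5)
We prove the two inequalities in \eqref{eq:sparse_below_boundary} separately. \eqref{eq:sparse_below_boundary0} then follows by taking logarithms, since $\alpha$ and $\delta$ are fixed. Write $A=A(\delta,\alpha)$ and $\tau_n \stackrel{{\rm def.}}{=} An^2/N$. Up to the floor in \eqref{eq:mnregime}, $\tau_n = 2\sqrt{(1+\delta)\alpha}\,n^{1/2-\alpha}\sqrt{\log n}\to 0$ because $\alpha>1/2$. So the goal is $1-\mathcal R_n(A)\asymp \tau_n$ with absolute constants.

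\emph{Lower bound on $1-\mathcal R_n$ (sum test).} Take $T^H_{\rm sum}$ from \eqref{def:sumscantest} with $H=An^2/2$. Under $\P_0$ we have $X_{[N]\times[N]}\sim N(0,N^2)$. For any $\btheta\in\Theta(A)$, $X_{[N]\times[N]}\sim N(\sum\theta_{ij},N^2)$ with $\sum\theta_{ij}\ge An^2$, and monotonicity in the mean makes $\partial\Theta(A)$ the worst case. Hence
\[
\mathrm{Risk}(T^H_{\rm sum})= 2\bar\Phi(\tau_n/2)=1-\big(2\Phi(\tau_n/2)-1\big),
\]
where $\Phi$ is the standard normal distribution function and $\bar\Phi=1-\Phi$. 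Since $\tau_n\to0$, $2\Phi(\tau_n/2)-1=(1+o(1))\tau_n/\sqrt{2\pi}$. This gives $1-\mathcal R_n(A)\ge c\,\tau_n$ for $n\ge n_0$.

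\emph{Upper bound on $1-\mathcal R_n$.} We use Lemma~\ref{lem:risk_lowerbnd_ab} with the uniform prior $\bpi$ on $\partial\Theta(A)$ and a truncated likelihood $\tilde L=|\mathcal S|^{-1}\sum_S e^{AX_S-A^2n^2/2}\mathbf 1_{E_S}$. I expect it to give a bound of the form
\[
1-\mathcal R_n \le |1-\E_0\tilde L| + \sqrt{\var_0(\tilde L)}
\]
up to an absolute constant. The events $E_S$ will follow the Butucea--Ingster blanket truncation. For every sub-rectangle $R'\times C'\subset S$ with $|R'|=k$, $|C'|=\ell$ and $k\ell$ above a threshold $m_n$ (chosen polylogarithmic or a small power of $n$), require $X_{R'\times C'}\le Ak\ell+\sqrt{2k\ell\,\log(\binom nk\binom n\ell)}\,(1+\epsilon)$.

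The first term is then $\P_{\btheta}(E_S^c)$. A union bound and Gaussian tails make it $n^{-\omega(1)}$, or at least $o(\tau_n)$ after tuning $\epsilon$ and $m_n$; this costs nothing. The variance has the Gibbs form \eqref{eq:trunc_secmom}: $\E_0\tilde L^2-(\E_0\tilde L)^2$ is at most an average over independent hypergeometric overlaps $(k,\ell)$ of $e^{q(k\ell,A)}-1$, where $q(k\ell,A)=A^2k\ell$ for $k\ell\le m_n$. We split it into three pieces.
\begin{enumerate}[label=(\roman*)]
\item \emph{Typical and moderate overlaps}, $k\ell\le m_n$. Here $A^2k\ell\lesssim m_n\log n/n\to0$, so $e^{A^2k\ell}-1\le (1+o(1))A^2k\ell$. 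The contribution is at most $(1+o(1))A^2\E[k]\E[\ell]=(1+o(1))A^2n^4/N^2=(1+o(1))\tau_n^2$. This single linear term is what produces the sharp rate, with an absolute constant.
\item \emph{Large overlaps with truncation active.} Here $q(k\ell,A)$ is the truncated exponent, which grows only like $A\sqrt{k\ell\log\binom nk\binom n\ell}$ rather than $A^2 k\ell$. We compare it against the entropy term $\log N_{k,\ell}-\log|\mathcal S|\approx -(k+\ell)\alpha\log n$, in the spirit of the free-energy computation.
\item \emph{Boundary effects} from the truncation on the null side; these should be handled by the same estimates as in (ii).
\end{enumerate}

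\emph{Main obstacle.} The hard step is showing that pieces (ii) and (iii) are $o(\tau_n^2)=o(n^{1-2\alpha}\log n)$, a polynomially small target, and not merely $o(1)$ as in \cite{butuceasubmatrix2013}. Since $\delta<0$, the energy $4(1+\delta)\alpha\, k\ell\log n/n$ for $k,\ell\le n$ is beaten by the entropy $(k+\ell)\alpha\log n$ with a margin proportional to $|\delta|$. That margin gives decay $n^{-c|\delta|(k+\ell)}$ beyond the typical scale, but it must be summed carefully. The delicate window is $k\ell$ just above $m_n$ when $n^2/N=n^{1-\alpha}$ is itself large (the case $\alpha<1$). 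There I would first try choosing $m_n=n^{2(1-\alpha)}\log^2 n$, so that hypergeometric concentration makes $\P(k\ell>m_n)$ superpolynomially small up to the point where the truncation takes over. I would then verify the crossover region directly with the binomial tail estimates from Section~\ref{sec:prelim}.
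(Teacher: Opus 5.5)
\textbf{Verdict: the half via the sum test is correct, but the other half (upper bound on $1-\mathcal R_n$) has a genuine gap.} Your sum-test argument is essentially the paper's; the paper takes $H=0$ and gets risk $\tfrac12+\Phi(-\tau_n)$, and your $H=An^2/2$ works equally well. Your small-overlap computation, giving $(1+o(1))A^2\E[k\ell]=(1+o(1))\tau_n^2$, is also correct. The gap is that the large-overlap estimate, which is the whole difficulty, is left as an expectation, and the concrete ingredients you offer for it do not work as stated.

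First, your family of sub-rectangles includes $S$ itself ($k=\ell=n$). There $\binom nn\binom nn=1$, so $E_S\subset\{X_S\le An^2\}$. Under $\P_{\btheta}$ this event fails with probability exactly $\tfrac12$, so $\E_0\tilde L\le\tfrac12$ for every choice of $\epsilon$ and $m_n$, and the claimed union bound is false. You need an additive buffer, e.g.\ $t_{k\ell}=\sqrt{2k\ell(\log\binom nk\binom n\ell+b_n)}$ with $b_n\ge(2\alpha+2)\log n$; then $\eta:=\P_{\btheta}(E_S^c)\le n^2e^{-b_n}$. Moreover, the requirement is $\eta=O(\tau_n^2)$, not $o(\tau_n)$. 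Because you bound the truncation probability by $1$ on small overlaps, the variance picks up an extra $1-(1-\eta)^2\approx2\eta$, and $\sqrt{2\eta}\gg\tau_n$ if you only know $\eta=o(\tau_n)$.

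Second, your energy--entropy heuristic is wrong in both forms you state.
\begin{itemize}
\item \emph{Untruncated.} At $k=\ell=n$ the energy $A^2n^2=4(1+\delta)\alpha n\log n$ \emph{exceeds} the entropy cost $2\alpha n\log n$ whenever $\delta>-\tfrac12$. There is no $|\delta|$-margin here, which is exactly why truncation is needed at all.
\item \emph{Truncated.} The exponent is not of order $At_{k\ell}$. After the change of measure $e^{A(X_S+X_{S'})-A^2n^2}d\P_0$, one has $X_{S\cap S'}\sim N(2Ak\ell,k\ell)$. So the pair term is at most $e^{A^2k\ell}\Phi\big((t_{k\ell}-Ak\ell)/\sqrt{k\ell}\big)\le\exp\big(\tfrac12A^2k\ell+At_{k\ell}\big)$ when $t_{k\ell}<Ak\ell$, and $\tfrac12A^2k\ell$ is the leading term.
\end{itemize}
The actual mechanism compares $\tfrac12A^2k\ell=2(1+\delta)\alpha\tfrac{k\ell}{n}\log n\le(1+\delta)\alpha(k+\ell)\log n$ with the entropy bound $\log(N_{k,\ell}/|\mathcal S|)\le Cn-(k+\ell)\log\tfrac Nn$. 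This leaves a margin of $|\delta|\alpha(k+\ell)\log n$. The correction $At_{k\ell}=O(\sqrt{k\ell\log n})$ (using $\log\binom nk\binom n\ell\le2n\log2$) is negligible once $\rho=\sqrt{k\ell}/n$ is bounded below. Below that scale the untruncated count already decays, since $4(1+\delta)\rho^2<2\rho$ for $\rho<\tfrac1{2(1+\delta)}$.

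With these repairs your route goes through. Since its first-moment loss is negligible, it would even give $1-\mathcal R_n\le(\tfrac12+o(1))\tau_n$. The paper avoids the sub-rectangle machinery entirely. It truncates only at $\{X_S\le An^2+n\Phi^{-1}(1-\tau_n)\}$, deliberately giving up first moment exactly $\tau_n$, which is affordable because $\tau_n$ is the target rate. It then controls large overlaps through the joint lower tail of $(X_S,X_{S'})$ with correlation $\rho^2$. This yields an effective energy of about $A^2n^2\tfrac{\rho^2}{1+\rho^2}\le\tfrac12A^2n^2\rho$, which is strictly less than the entropy cost $2\alpha\rho n\log n$ because $\delta<0$.
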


Theorems~\ref{thm:dense_below_boundary} and \ref{thm:sparse_below_boundary} reveal a qualitatively different behavior compared to the above-boundary phase studied in 
Theorems~\ref{thm:dense_above_boundary} and \ref{thm:sparse_riskbnd}. Specifically, the minimax risks approach their limiting value at a much slower polynomial rate, rather than at a stretched or super-exponential speed. This distinction is also reflected in the proof mechanisms for the two regimes, particularly in the lower bounds. While the above-boundary rates emerge from a single free energy-like term dominating the second moment of a suitably truncated statistic (e.g., the likelihood ratio), the below-boundary rates result from the combined contributions of several distinct terms; see \S\ref{subsec:overview} for more details.

\subsection{Critical regime $(\delta=0)$}\label{subsec:main_on_bnd}
We finally record a critical result in a fixed-submatrix-size sparse regime. Here 
$n$ remains bounded and $N$ grows to $\infty$ although our proof can be easily 
adapted to accommodate the regime where $n$ grows sufficiently slowly as a 
function of $N$. Since we are working in the extremely sparse limit (i.e., $\alpha = \infty$), the value of the threshold $A^\ast$ corresponding to \eqref{def:Adelta} in 
this case is given by $\sqrt{\tfrac{4\log N}{n}}$ (note that $\alpha \log n = \log 
\frac Nn$ when $N = n^{1 + \alpha}$).
\begin{theorem}[A fixed-size critical limit]\label{thm:criticality}
For any fixed $n$ and $A^\ast = \sqrt{4\log N/n}$,
\begin{equation}\label{eq:lim_critical}
\lim_{N\to\infty} \mathcal R_n(A^\ast) = \frac12.
\end{equation}
\end{theorem}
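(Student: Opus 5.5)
We prove the two bounds $\limsup\le\frac12$ and $\liminf\ge\frac12$ separately, with $n$ fixed throughout. Write $A=A^\ast=\sqrt{4\log N/n}$, so that $nA=\sqrt{4n\log N}$.

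\emph{Upper bound.} We use the scan test $T^H_{\rm scan}$ from \eqref{def:sumscantest}, and choose the threshold $H$ so that the statistic $X_S/n$ has mean $A$ at the planted $S$. Under any $\btheta\in\Theta(A)$ with planted support $S^\ast$, the variable $X_{S^\ast}/n$ is $N(\mu,1)$ with $\mu\ge A n^2/n=\sqrt{4n\log N}$. Take $H=\sqrt{4n\log N}=nA$. Then
\[
\P_{\btheta}(T=0)\le \P(X_{S^\ast}/n\le H)\le \P(N(0,1)\le 0)=\tfrac12 .
\]
Under $\P_0$, the union bound gives
\[
\P_0(T=1)\le |\mathcal S|\,\bar\Phi(H).
\]
Here $|\mathcal S|=\binom Nn^2\le N^{2n}/(n!)^2$ and $\bar\Phi(\sqrt{4n\log N})\le N^{-2n}/\sqrt{8\pi n\log N}$. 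The product is therefore $O((\log N)^{-1/2})\to0$. This yields $\limsup\mathcal R_n\le\frac12$.

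\emph{Lower bound.} We take the uniform prior $\bpi$ on $\partial\Theta(A)$, so that $\mathcal R_n\ge 1-\frac12\E_0|L_{\bpi}-1|$. Following the truncation philosophy (Lemma~\ref{lem:risk_lowerbnd_ab}), we define
\[
\tilde L=\frac1{|\mathcal S|}\sum_{S}e^{AX_S-A^2n^2/2}\mathbf 1_{E_S},\qquad E_S=\{X_S\le An^2\}.
\]
This is the truncation at the planted mean, where exactly half of the planted mass survives. Indeed, under $\P_S$ we have $X_S\sim N(An^2,n^2)$, so $\E_0\tilde L=\P_S(E_S)=\frac12$.

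Since $L\ge\tilde L$, for any test $T$ we get
\[
\P_0(T=1)+\E_0[L\mathbf 1_{T=0}]\ge \P_0(T=1)+\E_0[\tilde L\mathbf 1_{T=0}]\ge \E_0\min(1,\tilde L)\ge \E_0\tilde L-\E_0(\tilde L-1)_+ .
\]
It therefore suffices to show $\tilde L\to\frac12$ in $L^2(\P_0)$, i.e.\ $\E_0\tilde L^2\to\frac14$.

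Expanding the second moment gives
\[
\E_0\tilde L^2=|\mathcal S|^{-1}\sum_{k,\ell}N_{k,\ell}\,e^{A^2k\ell}\,\P_{\rm tilted}(E_S\cap E_{S'}),
\]
where under the tilted measure $(X_S,X_{S'})$ is Gaussian with means $An^2+Ak\ell$ each. We handle the overlap pairs in three groups.

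\begin{enumerate}
\item The disjoint pairs ($k\ell=0$; strictly, pairs with no shared cells) have total weight $1-o(1)$. For these the events are independent with probability $\frac12$ each, contributing $\frac14+o(1)$.
\item For the diagonal $S=S'$ we bound the term crudely by $e^{A^2n^2}/|\mathcal S|$ times a Gaussian probability. Setting $X_S=An^2+ n z$, the term is $|\mathcal S|^{-1}\E[e^{2nAz+\dots}\mathbf 1_{z\le0}]$, which is at most $|\mathcal S|^{-1}e^{A^2n^2}\P(N(2An^2,n^2)\le An^2)\approx |\mathcal S|^{-1}e^{A^2n^2/2}/(An)$. With $e^{A^2n^2/2}=N^{2n}$ and $|\mathcal S|\sim N^{2n}/(n!)^2$, this is $O(n!^2/\sqrt{\log N})\to0$. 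This is exactly the criticality.
\item Each remaining partial-overlap pair $0<k\ell<n^2$ has count $N_{k,\ell}\lesssim N^{2n-k-\ell}$ relative to $|\mathcal S|$. Its energy factor combined with the truncation is at most $\exp(A^2k\ell-\text{penalty})$. We bound it by $e^{A^2k\ell}\P(\text{jointly }X_S,X_{S'}\le An^2)$, computing the Gaussian tail, which gives an exponent of order $\frac{A^2(k\ell)^2}{2(2n^2-k\ell)}\cdot$const. The needed inequality is that this effective exponent is $<(k+\ell)\log N$, i.e.\ that $\frac{4k\ell}{n}\cdot\frac{k\ell}{2n^2-k\ell}$-type expressions stay strictly below $k+\ell$ for $k\ell<n^2$. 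This is plausible since $k\ell\le n\min(k,\ell)$.
\end{enumerate}

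With $n$ fixed there are finitely many $(k,\ell)$, so each term is $N^{-c}\to0$ and no summation issues arise. Note that the diagonal case (ii) is the only group that is merely polylogarithmically small.

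The main obstacle is the partial-overlap bound in (iii). We must obtain a strict exponent gap for every intermediate $(k,\ell)$ and not just in the extreme cases. If this gap fails near $k=\ell=n-1$, the fallback is to strengthen the truncation by also requiring $X_{S\cap S'}$-type sub-sums (row and column partial sums) to be below their means scaled appropriately. Such constraints remove a vanishing fraction of the planted mass while killing those overlaps.
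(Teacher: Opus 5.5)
Your upper bound, your truncation $E_S=\{X_S\le An^2\}$ with $\E_0\tilde L=\frac12$, and your handling of the disjoint and diagonal pairs all coincide with the paper's argument. The gap is step (iii), which carries the whole lower bound and which you leave at ``plausible''. As written it does not close; as you set it up, it actually fails.

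The penalty you display, $\frac{A^2(k\ell)^2}{2(2n^2-k\ell)}$ (taken with constant $1$), is strictly smaller than the true one whenever $k\ell<n^2$; the two agree only at $k\ell=n^2$. With your penalty, the required condition ``net exponent $<(k+\ell)\log N$'' becomes, for $k=\ell=un$, the inequality $4u-3u^3<2-u^2$. This is equivalent to $(u-1)(3u^2+2u-2)>0$, which is false for $u\in(\frac{\sqrt7-1}{3},1)$, e.g.\ at $k=\ell=n-1$ when $n\ge3$. That is exactly the near-diagonal failure you were worried about.

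Your phrasing of the criterion is also off. What must stay below $k+\ell$ is $\frac{4k\ell}{n}$ minus the normalized penalty, not the penalty itself. The fallback of extra sub-sum truncations is not worked out: you would have to recompute the first moment and show the removed planted mass vanishes. It is also unnecessary.

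The missing step is the correct lower-orthant bound. Under the tilt, set $Z_1=(X_S-An^2-Ak\ell)/n$ and $Z_2=(X_{S'}-An^2-Ak\ell)/n$. These are standard normals with correlation $\rho=k\ell/n^2$, and $E_S\cap E_{S'}=\{Z_1\le -t,\,Z_2\le -t\}$ with $t=Ak\ell/n$. This event lies in $\{Z_1+Z_2\le -2t\}$, and $Z_1+Z_2\sim N(0,2(1+\rho))$. Hence its probability is at most $\exp(-\frac{t^2}{1+\rho})=\exp(-\frac{A^2(k\ell)^2}{n^2+k\ell})$; the paper obtains the same exponent from Lemma~\ref{lem:bvn}.

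Combine this with $N_{k,\ell}/|\mathcal S|\le C(n)N^{-(k+\ell)}$ and $A^2=4\log N/n$. The $(k,\ell)$ term is then at most $C(n)N^{-\beta_{k\ell}}$, where, with $s=\sqrt{k\ell}$ and $k+\ell\ge 2s$,
\[
\beta_{k\ell}=k+\ell-\frac{4nk\ell}{n^2+k\ell}\ \ge\ 2s-\frac{4ns^2}{n^2+s^2}=\frac{2s(n-s)^2}{n^2+s^2}.
\]
This is bounded below by some $c(n)>0$ whenever $0<k\ell<n^2$. So each of the finitely many partial-overlap classes contributes $O(N^{-c(n)})$, and $\E_0\tilde L^2\to\frac14$ follows with the truncation you already have.
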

Results of similar flavor have appeared previously in the context of Gaussian 
sequence models \citep[Theorem~8.1]{ingster2012nonparametric} and linear 
regression models \citep[Theorem~2.5]{mukherjee2020minimax}. However, 
\eqref{eq:lim_critical} is different from these results as the limiting value does 
{\em not} depend on the size of the signal $n$.

\section{Discussions and Open Directions}\label{sec:discussions}
\subsection{Connections to existing literature} The results in this paper are 
connected to both the classical foundations of statistical efficiency theory and 
recent research on sparse signal detection. We expand on them below. We refer the 
reader to \cite[Section~1.4]{mukherjee2020minimax} for more detailed discussions 
in a related context.

\subsubsection{Connections to Chernoff exponents and Bahadur efficiency} The viewpoint adopted here is also related to the classical theory of asymptotic error exponents in hypothesis testing.  For testing a fixed simple null $P$ against a fixed simple alternative $Q$ from $m$ independent observations, the optimal sum of the two errors is governed, on the exponential scale, by the Chernoff-exponent
\begin{equation*}
   -\inf_{0\le \lambda \le1} \log \int \left(\frac{dQ}{dP}\right)^{\lambda} dP,
\end{equation*}
see, e.g., \cite[Corollary~12.1]{polyanskiy2014lecture}. Bahadur's efficiency 
gives another large-deviation comparison of tests, in which the relevant quantity 
is the exponential rate at which attained significance levels decay under 
alternatives; see, for instance, \cite{bahadur1967rates}. Both theories ask for 
more than consistency: they compare tests by the speed at which their error 
probabilities, or related tail probabilities, become small. In this regard, our results can be 
viewed as providing an analog of Chernoff exponent and Bahadur efficiency for sparse 
submatrix detection problems.

There are two important differences in the present setting.  First, the sparse submatrix problem is composite rather than simple-versus-simple.  The likelihood ratio that enters the lower bounds is therefore an integrated likelihood ratio over a least favorable prior on submatrices, not a product of i.i.d. one-observation likelihood ratios. Second, the null and alternative approach one another with $n$ and $N$, and hence the error need not decay exponentially on the sample-size scale.  Indeed, the present results show three distinct behaviors: stretched- or super-exponential decay of $\mathcal R_n(A^\ast(\delta, \alpha))$ above the boundary, polynomial decay of $1-\mathcal R_n(A^\ast(\delta, \alpha))$ below the boundary, and a non-degenerate limiting risk at criticality.

\subsubsection{Connections to Bhattacharyya affinity} Another useful way to view the minimax risk is through the affinity between the null distribution and a suitably averaged alternative. If $\P_p = \int \P_{\btheta}\,d p(\btheta)$ is the mixture induced by a prior $p$ on 
planted submatrices, the Bhattacharyya affinity is
\begin{equation*}
\rho(\P_0, \P_p) = \E_0\sqrt{L_p},
\end{equation*}
where $L_p = d\P_{p}/d\P_0$. This quantity directly controls the Bayes testing risk and is 
often more closely tied to the optimal testing error than the second moments of likelihood 
ratios; see, e.g., \cite{bhattacharyya1946measure,addarioberry2010combinatorial}. In particular, controlling the affinity  could give a conceptually clean route to sharp lower bounds.

For sparse submatrix detection, however, the affinity is difficult to evaluate directly. The integrated likelihood ratio is a highly dependent average over $\binom{N}{n}^2$ possible submatrices, and its tails are governed by rare but influential pairs of submatrices with atypically large overlaps. In a sense, the technical core of the paper may be regarded as a way to understand the Bhattacharyya affinity, carried out through more tractable second-moment analysis.

\subsubsection{Connections to sparse Gaussian mixture models} The sparse Gaussian mixture model is a closely related benchmark for rare/weak signal detection.  In its simplest form one observes independent variables from
\begin{equation*}
    H_0:\;Y_i\sim N(0,1),
    \qquad
    H_1:\;Y_i\sim (1-\varepsilon_n)N(0,1)+\varepsilon_n N(\mu_n,1),
\end{equation*}
and asks for the detection boundary or for sharper error asymptotics.  The detection-boundary theory for such models is classical in the higher-criticism literature \cite{donoho2004higher,cai2011optimal,caiwu2014optimal,ariascastro2019sparse}.  More recently, \cite{ligo2016detecting} studied the rate of decay of error probabilities for sparse mixtures in {\em some} regimes, showing that the relevant decay can be sublinear in the number of observations and can be governed by divergence quantities different from those in the non-sparse case. 

However, for the question of interest in this paper, the problems of sparse Gaussian mixtures and sparse submatrix detection are very different.  In the mixture model, the likelihood ratio factorizes over observations, so one can often reduce the analysis to large deviations for sums of independent random variables. In the submatrix model, the unknown signal is a combinatorial object with row and column structure; two alternatives are not independent labels but overlapping submatrices. The resulting entropy--energy balance over overlap sizes is responsible for the secondary transition at $\alpha + \delta = 1/2$ and for the distinction between sum- and scan-dominated regimes.

\subsection{Comparisons with \cite{mukherjee2020minimax}}
A particularly close conceptual antecedent to the present work is the paper of
\cite{mukherjee2020minimax} on minimax exponents for sparse testing in
high-dimensional linear regression.  That paper asks the same broad
second-order question which motivates the present article: once the usual
detection boundary is known, what is the sharp asymptotic size of the minimax
testing risk itself?  Thus the two papers belong to the same general program of refining
detection-boundary theory into a theory of sharp risk asymptotics.

The similarity, however, is at the level of this general program rather than at
the level of the mathematical model.  The work of \cite{mukherjee2020minimax}
concerns sparse alternatives in high-dimensional linear regression; in the
orthogonal-design case, this reduces essentially to a Gaussian sequence model,
while for random designs the size and distribution of the design matrix enters the analysis.
The present paper concerns the square sparse-submatrix model of 
\cite{butuceasubmatrix2013} where the (relevant) alternatives are parametrized by submatrices rather than ``unstructured'' subsets of $[N] \times [N]$. The underlying product structure is crucial for the entropy--energy balance driving the secondary 
transition at $\alpha+\delta=1/2$. The distinctive role played by such a structure becomes 
even clearer from the tensor extension recorded in \S\ref{subsec:summary}.  The same 
methods extend to detecting an $n\times\cdots\times n$ order-$d$ elevated subtensor 
inside an $N\times\cdots\times N$ Gaussian tensor, for every fixed $d\ge2$.  In that
setting the detection boundary changes in the natural way, the secondary
dense/sparse transition line becomes $\alpha+\delta=1/d$, and the {\em complete}
above-boundary, below-boundary, and fixed-size critical risk picture persists. 
Thus the matrix case should be viewed as the 
first member of the order-$d$ sub-tensor detection problem with $d \ge 2$, as opposed to 
an extension of the Gaussian sequence-type models.

The scope and sharpness of the results are also different. \cite{mukherjee2020minimax} 
obtain a set of sharp exponent results, but their Section~4 explicitly identifies two
parameter regions in which the minimax risk remains uncharacterized. By
contrast, for the canonical square submatrix model, and more generally for the
corresponding fixed-order tensor extensions described above, the present work
gives a {\em closed} phase diagram across the above-boundary, below-boundary, and
critical regimes. 

\subsubsection{Computational considerations}
The minimax results here are information-theoretic.  In the scan-dominated regimes, the natural optimal procedure involves maximizing $X_S$ over all $S\in\mathcal S$, which is computationally infeasible in the worst case.  Computational-statistical tradeoffs for submatrix detection were studied by \cite{ma2015computational}, who related efficient detection to planted-clique-type barriers in certain regimes (see also the more recent work \cite{MR4928482}).  More broadly, similar computational gaps are known or conjectured in sparse PCA and related planted-structure problems; see, for example, \cite{berthet2013complexity}.

The sharp-risk point of view raises a more refined computational question. Do the polynomial-time tests achieve the same large-deviation exponent above the boundary or the same polynomial difference (from $1$) below it? Conversely, can one prove computational lower bounds at the level of risk exponents rather than merely at the level of zero-one detectability? 

\subsection{Other models and further directions}\label{subsec:directions}
Several extensions are natural. First, one could study rectangular submatrices with $n\ne m$ and ambient dimensions $N\ne M$ with possibly different values of the exponent $\alpha$ in each dimension, where additional regimes may appear.  Second, it would be interesting to develop a full critical-window theory, allowing $n\to\infty$ and perturbing $A$ at scales finer than those considered here.  Third, analogous sharp-risk questions can be asked for non-Gaussian exponential-family noise, unknown variance, two-sided alternatives, and heterogeneous signal amplitudes.  Finally, the free energy structure of the truncated second moment suggests possible connections with sharper large-deviation principles for overlap distributions in other high-dimensional testing problems.

\section{Preliminaries}\label{sec:prelim}
In this section we gather some results that are used repeatedly throughout our 
proofs. Lemma~\ref{lem:factorial} and the subsequent Corollary~\ref{cor:factorial} 
provide ``manageable'' expressions for certain counts of submatrices 
which are crucial for obtaining minimax lower bounds via second moment 
computations. In Lemma~\ref{lem:risk_lowerbnd_ab}, we give a general lower bound 
on the minimax risk of tests in terms of first and second moments of {\em truncated} likelihood ratios. We end the section with some standard results on 
normal distributions in one and higher dimensions.

\smallskip

We now proceed to state our first result which gives upper bounds on the fraction 
of submatrix pairs with order $n \times n$ that intersect at a $k \times \ell$ 
submatrix. We consider the ``one-dimensional'' version here where we only take into 
account the number of common rows $k$ (or, equivalently, the number of common 
columns $\ell$); cf.~\eqref{def:Nkl} in Section~\ref{sec:dense_above}. These bounds, especially in the tractable forms presented in the 
lemma below, are crucial for delicate second moment computations which lead to the 
lower bounds in our main theorems. They are consequences of Stirling's formula 
combined with careful analysis. 
\begin{lemma}[Asymptotics of binomial coefficients]\label{lem:factorial}
For any non-negative integer $k$ and positive integers $n$ and $N$ satisfying $k 
\le n \le \tfrac N 3$, we have
\begin{equation}\label{eq:factorial_dense}
\frac{\binom{n}{k}\binom{N - n}{n - k}}{\binom{N}{n}} \le \tfrac{C}{\sqrt{k \vee 1}\frac {(n-k)\vee1} n}\exp\big(k \log \tfrac{n^2}{Nk} - 2Nh(\tfrac{N-n}N) + 2n h(\tfrac {n - k}n) + Nh(\tfrac{N - 2n + k}N)\big)
\end{equation}
where (as in the rest of the paper) $h(x) \stackrel{{\rm def.}}{=} x \log \tfrac 1x$ for $x > 0$ and we 
interpret $0\log \infty = 0$. Furthermore, if $N \ge n^{\frac32} = n^{1 + 
\frac12}$ (cf.~\eqref{eq:mnregime}), we can write the simplified bound
\begin{equation}\label{eq:factorial}
  \frac{\binom{n}{k}\binom{N - n}{n - k}}{\binom{N}{n}} \le \tfrac{C}{\sqrt{k \vee 1}\frac {(n-k)\vee1} n}\exp(k\log \tfrac {en^2}{Nk} -\tfrac{n^2}N + \tfrac{2kn}N).
\end{equation}
\end{lemma}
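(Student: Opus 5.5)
We apply Stirling's formula with explicit error control to each of the five factorials-based quantities $\binom nk$, $\binom{N-n}{n-k}$ and $\binom Nn$, and then rearrange the resulting entropy expressions. We use the two-sided Stirling bound
\[
\sqrt{2\pi m}\,(m/e)^m\le m!\le e\sqrt{m}\,(m/e)^m, \qquad m \ge 1.
\]
This gives
\[
\binom{a}{b}\asymp \sqrt{\tfrac{a}{b(a-b)}}\,\exp\big(a\,H(b/a)\big)
\]
for $1\le b\le a-1$, where $H(p)=h(p)+h(1-p)$ and the implicit constants are absolute. The edge cases $b=0$ or $b=a$ hold trivially with the convention $0\log\infty=0$ and the $\vee 1$'s.

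\emph{Exponent.} Write each binomial in the form $\exp(\sum \pm m\log m)$. Thus $\binom nk$ contributes $n\log n-k\log k-(n-k)\log(n-k)$. The factor $\binom{N-n}{n-k}$ contributes $(N-n)\log(N-n)-(n-k)\log(n-k)-(N-2n+k)\log(N-2n+k)$. The factor $\binom Nn^{-1}$ contributes $-N\log N+n\log n+(N-n)\log(N-n)$.

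Now normalize each $m\log m$ against the natural reference scale: $m=n$ for quantities of size $\le n$, and $m=N$ otherwise. Use $m\log m = m\log M - M h(m/M)$ with $M\in\{n,N\}$. The term $-(n-k)\log(n-k)$ appears twice, giving $2n h(\frac{n-k}{n})$ after subtracting $2(n-k)\log n$. The term $(N-n)\log(N-n)$ appears twice, giving $-2Nh(\frac{N-n}N)$. The term $-(N-2n+k)\log(N-2n+k)$ gives $+Nh(\frac{N-2n+k}N)$. The remaining $-k\log k$ stays as is.

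Collecting the leftover linear-in-$\log n$ and linear-in-$\log N$ terms should produce exactly $k\log n^2 - k\log N$. Counting powers: $\log n$ has coefficient $2n-2(n-k)=2k$, and $\log N$ has coefficient $2(N-n)-(N-2n+k)-N=-k$. Together with $-k\log k$, this is the term $k\log\frac{n^2}{Nk}$ in \eqref{eq:factorial_dense}. This step is a bookkeeping identity to be checked carefully.

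\emph{Prefactor.} The square-root prefactors combine as
\[
\sqrt{\tfrac{n}{k(n-k)}}\cdot\sqrt{\tfrac{N-n}{(n-k)(N-2n+k)}}\cdot\sqrt{\tfrac{n(N-n)}{N}}.
\]
Since $n\le N/3$, all ratios of $N$-scale quantities, namely $N-n$, $N-2n+k$ and $N$, are bounded above and below by absolute constants. Hence the product is $\le C\,\frac{n}{\sqrt{k}\,(n-k)}$, which is the stated prefactor $\frac{C}{\sqrt{k\vee1}\,\frac{(n-k)\vee1}{n}}$.

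When $k=0$ or $k=n$, the corresponding binomial is $1$ exactly. Only the remaining Stirling factors enter, and the bound still holds with the $\vee1$ convention. One should double-check that Stirling's constants degrade at most by absolute factors there.

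\emph{Simplified bound.} The main obstacle is \eqref{eq:factorial}, where the three $h$-terms must be Taylor expanded in the regime $N\ge n^{3/2}$. The plan has three parts.

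First, use $2nh(\frac{n-k}n)=-2(n-k)\log(1-\frac kn)\le 2k$. This follows from $-\log(1-x)\le x/(1-x)$, so $(n-k)\cdot\frac{k/n}{1-k/n}=k$. This produces the extra $k$ that turns $\log\frac{n^2}{Nk}$ into $\log\frac{en^2}{Nk}$, with one $k$ to spare.

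Second, expand the two $N$-scale terms, writing $u=n/N$ and $v=(2n-k)/N$:
\[
-2Nh(1-u)+Nh(1-v)=2(N-n)\log(1-u)-(N-2n+k)\log(1-v).
\]
Expanding $(1-x)\log(1-x)=-x+\frac{x^2}{2}+O(x^3)$ gives, at first order, $2n-(2n-k)=k$. This cancels the spare $k$ from the first step, modulo the sign conventions, which need checking. At second order the contribution is
\[
N\Big(u^2-\tfrac{v^2}{2}\Big)=\frac{n^2}{N}-\frac{(2n-k)^2}{2N}=-\frac{n^2}{N}+\frac{2kn}{N}-\frac{k^2}{2N},
\]
which is dominated by $-\frac{n^2}N+\frac{2kn}N$.

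Third, the cubic remainder is $O(n^3/N^2)$, which is $O(1)$ precisely when $N\ge n^{3/2}$. This explains the hypothesis, and the remainder is absorbed into $C$. To make this rigorous, use the exact alternating-series bounds for $(1-x)\log(1-x)$ on $x\in[0,2/3]$ rather than asymptotic $O(\cdot)$, so that all constants are absolute.
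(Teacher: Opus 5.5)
Your proposal is correct and follows essentially the paper's own route. You apply Stirling's formula to the three binomials and regroup the $m\log m$ terms into the entropy terms; your $\log n$/$\log N$ coefficient count and prefactor bound both check out. For \eqref{eq:factorial} you then combine $2nh(\tfrac{n-k}{n})\le 2k$ with a second-order Taylor expansion of the $N$-scale terms, whose cubic remainder is $O(n^3/N^2)=O(1)$ when $N\ge n^{3/2}$.

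The one correction is the sign you flagged: the first-order term is $-2n+(2n-k)=-k$, not $+k$. This $-k$ is exactly what cancels the spare $k$ and leaves the factor $e$ in $\log\tfrac{en^2}{Nk}$.
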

\begin{proof}
From Stirling's formula, we have for any positive integers $m > k$,
\begin{equation}\label{eq:stirlingmk}
C^{-1} \le \frac{\binom{m}{k}}{\frac{1}{
\sqrt{k(1 - \frac km)}} \exp\big(k \log \tfrac mk + (m - k) \log \tfrac m{m - k}\big)}  \le C
\end{equation}
where $C \in (1, \infty)$. 
Now consider non-negative integers $k, n$ and $N$ satisfying $k < n < \frac N2$. 
Using \eqref{eq:stirlingmk}, we can write
\begin{equation}\label{eq:Nnk}
\binom{N - n}{n-k} \le \frac{C}{\sqrt{(n - k)(1 - \frac{n - k}{N - n})}}  \exp\big((n-k) \log \tfrac {N-n}{n-k} + (N - 2n + k) \log \tfrac{N - n}{N - 2n + k}\big).
\end{equation}
We can rewrite the first term inside the exponential as follows:
\begin{equation}\label{eq:N2nk}
(n-k) \log \tfrac{N-n}{n-k} = (n - k) \log \tfrac N n - (n - k) \log \tfrac N{N-n} + (n - k) \log \tfrac n{n - k}.
\end{equation}
As to the other term, we can write
\begin{equation}\label{eq:decomp}
(N - 2n + k)\log \tfrac {N - n}{N - 2n + k} = (N - 2n + k) \log \tfrac{N}{N - 2n + 
k} - (N - 2n + k)\log \tfrac{N}{N - n}.
\end{equation}
The expression of $\binom{N - n}{n-k}$ resulting from \eqref{eq:Nnk} and 
\eqref{eq:N2nk} combined with the expressions of $\binom{n}{k}$ and $\binom{N}{n}$ 
obtained from \eqref{eq:stirlingmk} give us \eqref{eq:factorial_dense} in the 
regime $1 \le k < n \le \tfrac N3$. The terminal cases $k = 0$ and $k = n$ can be 
verified similarly upon noting that $\binom{n}{k} = 1$ in both cases and $\binom{N 
- n}{n - k} = 1$ in the latter case.

In order to derive \eqref{eq:factorial}, let us revisit \eqref{eq:decomp} and 
bound it as
\begin{equation*}
\begin{split}
(N - 2n + k)\log \tfrac {N - n}{N - 2n + k} &= (N - 2n + k) \log \tfrac{N}{N - 2n + k} - (N - 2n + k)\log \tfrac{N}{N - n}\\
&\le (2n - k) - \tfrac12\tfrac{(2n - k)^2}{N - 2n + k}  
+ C \tfrac{(2n - k)^3}{(N - 2n + k)^2}  - (N - 2n + k)\log \tfrac{N}{N - n}
\end{split}    
\end{equation*}
where in the last step we used the 
(third order) Taylor approximation for $\log (1 + x)$. Plugging this bound and 
\eqref{eq:N2nk} into 
\eqref{eq:Nnk}, we obtain
\begin{equation*}
\begin{split}
\binom{N - n}{n-k}
\le \tfrac{C}{\sqrt{(n - k)(1 - \frac{n - k}{N - n})}} \exp\big((2n - k) - 
\tfrac{(2n - k)^2}{2(N - 2n + k)} + nh(\tfrac{n-k}{n}) + (n-k) \log \tfrac N n - 
N h(\tfrac{N-n}N)\big).
\end{split}
\end{equation*}
for $0 \le k < n$ where we used the condition $N \ge n^{\frac32}$. Together with 
\eqref{eq:stirlingmk} applied to $\binom{n}{k}$ and $\binom{N}{n}$, this yields
\begin{equation}\label{eq:factorial0}
\frac{\binom{n}{k}\binom{N - n}{n - k}}{\binom{N}{n}} \le \tfrac{C}{\sqrt{k}(1 - \frac k n)}\exp\big((2n - k) - \tfrac{(2n - k)^2}{2(N - 2n + k)} + 2nh(\tfrac{n-k}{n}) + 
k \log \tfrac{n^2}{Nk} - 2 N h(\tfrac{N-n}N) \big)
\end{equation}
for 
$1 \le k < n$. Now using the (second order) Taylor approximation of $\log (1 + x)$, we get the lower bound
\begin{equation*}
N h(\tfrac{N-n}N) = (N - n)\log \tfrac N{N-n} \ge n - \tfrac12 \tfrac{n^2}{N - n}
\end{equation*}
for all $N \ge 3n$. On the other hand, from the standard inequality $\log (1 + x) \le x$, we have
\begin{equation*}
 2nh(\tfrac{n-k}{n}) = 2(n - k)\log \tfrac{n}{n - k} \le 2k. 
\end{equation*}
Plugging these two estimates into the right-hand side of \eqref{eq:factorial0} and 
using a little bit of algebra, we obtain
\begin{equation*}
 \frac{\binom{n}{k}\binom{N - n}{n - k}}{\binom{N}{n}} \le \tfrac{C}{\sqrt{k}(1 - \frac k n)}\exp(k + k\log \tfrac {n^2}{Nk} -\tfrac{n^2}N + \tfrac{2kn}N)
\end{equation*}
for all 
$1 \le k < n \le N^{\frac 23}$. This is precisely the bound \eqref{eq:factorial} 
when $0 < k < n$. The cases $k = 0$ and $k = n$ follow in a similar manner 
(cf.~the second line below \eqref{eq:decomp}).
\end{proof}
In some situations, it is enough to work with the following simplified bound.
\begin{corollary}\label{cor:factorial} 
For any non-negative integer $k$ and positive integers $n$ and $N$ satisfying $k 
\le n \le \tfrac N 3$, we have
\begin{equation}\label{eq:factorial1}
\frac{\binom{n}{k}\binom{N - n}{n - k}}{\binom{N}{n}} \le \exp(C n - k\log \tfrac{N}{n}).\end{equation}
\end{corollary}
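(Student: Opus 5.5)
Rather than simplify the exponent of \eqref{eq:factorial_dense}, I would prove \eqref{eq:factorial1} directly from the exact product formula for the hypergeometric ratio. This avoids the Stirling prefactor and the entropy terms entirely. Write
\begin{equation*}
\frac{\binom{n}{k}\binom{N-n}{n-k}}{\binom{N}{n}} = \binom{n}{k}\cdot\frac{\binom{N-n}{n-k}}{\binom{N}{n}} .
\end{equation*}
The first factor satisfies the trivial bound $\binom{n}{k}\le 2^n = e^{n\log 2}$. All the work is in the second factor, which should supply the decay $(n/N)^k$ up to an $e^{Cn}$ loss.

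For the second factor I would use $\binom{N-n}{n-k}\le \binom{N}{n-k}$ and then compare $\binom{N}{n-k}$ with $\binom{N}{n}$ via the telescoping identity
\begin{equation*}
\frac{\binom{N}{n-k}}{\binom{N}{n}} = \prod_{j=n-k+1}^{n}\frac{j}{N-j+1}.
\end{equation*}
Each factor is at most $\frac{n}{N-n+1}\le \frac{n}{N-n}$. Since $n\le N/3$, we have $N-n\ge \tfrac{2}{3}N$, so each factor is at most $\tfrac32\,\tfrac nN$. The product over $k$ factors is therefore bounded by $(3/2)^k (n/N)^k\le e^{k\log(3/2)}\exp(-k\log\tfrac Nn)$.

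Combining the two factors gives
\begin{equation*}
\frac{\binom{n}{k}\binom{N-n}{n-k}}{\binom{N}{n}} \le \exp\big(n\log 2 + k\log\tfrac32 - k\log\tfrac Nn\big)\le \exp\big(Cn - k\log\tfrac Nn\big),
\end{equation*}
using $k\le n$, with $C=\log 3$ for instance. The terminal cases $k=0$ (empty product) and $k=n$ are covered by the same formula.

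There is no real obstacle here. The only point needing care is to confirm that the condition $n\le N/3$ keeps the denominators $N-j+1$ comparable to $N$, uniformly in $j$. This also explains why I bypass \eqref{eq:factorial_dense}: recovering \eqref{eq:factorial1} from it would require bounding $\frac{n}{\sqrt{k}(n-k)}\le e^{Cn}$ and linearizing the entropy terms $Nh(\cdot)$, which is more cumbersome than the elementary product bound.
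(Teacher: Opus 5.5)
Your proof is correct, and it takes a different route from the paper. The paper obtains \eqref{eq:factorial1} in one line from the Stirling-based bound \eqref{eq:factorial_dense} of Lemma~\ref{lem:factorial}, by bounding each piece of that estimate:
\begin{itemize}
\item the prefactor is at most polynomial in $n$;
\item $-2Nh(\tfrac{N-n}{N})\le 0$;
\item $k\log\tfrac{n^2}{Nk}=-k\log\tfrac Nn+nh(\tfrac kn)$ and $2nh(\tfrac{n-k}{n})$ are $O(n)$, because $h$ is bounded on $[0,1]$;
\item $Nh(\tfrac{N-2n+k}{N})\le C(2n-k)$, because $h(x)\le C(1-x)$ away from $0$.
\end{itemize}

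Your argument instead combines the exact telescoping identity for $\binom{N}{n-k}/\binom{N}{n}$ with the crude bound $\binom nk\le 2^n$. The hypothesis $n\le N/3$ makes each of the $k$ factors at most $\tfrac32\cdot\tfrac nN$. This route has several advantages:
\begin{itemize}
\item it is fully elementary and needs no Stirling asymptotics;
\item it gives the explicit constant $C=\log 3$;
\item it does not rely on Lemma~\ref{lem:factorial} at all;
\item it shows directly where the decay comes from: each shared row costs a factor of order $n/N$.
\end{itemize}

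The paper's route buys economy. Lemma~\ref{lem:factorial} is needed anyway for the finer estimate \eqref{eq:factorial}. That estimate keeps the precise $k\log\tfrac{en^2}{Nk}$ and $-\tfrac{n^2}{N}$ terms, which the dense-regime and below-boundary second-moment computations require. Your method wastes a factor $e^{O(n)}$ through $\binom nk\le 2^n$, so it could not replace the lemma there.

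Where the corollary is actually used, that loss is harmless. In the sparse above-boundary lower bound the relevant scale is $\log|\mathcal S|\asymp n\log n$. In the fixed-$n$ critical regime, the estimate $N_{k,\ell}/|\mathcal S|\le C(n)N^{-k-\ell}$ follows immediately from your bound.
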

\begin{proof}
\eqref{eq:factorial1} follows immediately from \eqref{eq:factorial_dense} upon 
noting that $0 \le \inf_{x \in [0, 1]} h(x) \le \sup_{x \in [0, 1]} h(x) < \infty$ 
and $h(x) \le C(1 - x)$ for $x \in [\frac 12, 1]$. We skip the verification of 
these two properties which are standard.
\end{proof}
To obtain tight lower bounds on the minimax risk in below-boundary and critical 
regimes, i.e., those covered under Theorems~\ref{thm:dense_below_boundary}--\ref{thm:criticality}, we use the generic inequality in 
Lemma~\ref{lem:risk_lowerbnd_ab} below. Inequalities of this type are standard in 
the literature; see, e.g., display~(6.5) in \cite{mukherjee2020minimax}.
\begin{lemma}\label{lem:risk_lowerbnd_ab}
Given a family of events $\bE = (E_S)_{S \in \mathcal S}$ (see \eqref{def:Sfamily}) measurable relative to $\bX$ and $A \in \R$, let us 
consider the likelihood ratio $L_{\bpi}^{\bE}$ truncated by $\bE$ defined as
\begin{equation}\label{def:LpiE}
L_{\bpi}^{\bE} = \frac{1}{|\mathcal S|} \sum_{S \in \mathcal S} \exp\left(A X_S - \tfrac{A^2n}{2}\right)\mathbf 1_{E_S}
\end{equation}
where $X_S$ is as in \eqref{def:XS} (cf.~\eqref{def:Lpi}). Then we have
\begin{equation}\label{eq:risk_lowerbnd_ab}
\mathcal R_n(A) \ge \E_0(L_{\bpi}^{\bE}) - \frac12\sqrt{\var_0(L_{\bpi}^{\bE})}.
\end{equation}
\end{lemma}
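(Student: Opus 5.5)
The plan is to reduce the minimax risk to a Bayes risk under the uniform prior $\bpi$ on $\partial\Theta(A)$, write that Bayes risk as $\E_0\min(1,L_{\bpi})$, and then replace $L_{\bpi}$ by its truncation $L_{\bpi}^{\bE}$ using monotonicity. Throughout I read the normalising term in \eqref{def:LpiE} as $\tfrac{A^2|S|}{2} = \tfrac{A^2n^2}{2}$, matching \eqref{def:Lpi}. With that reading, $\exp(AX_S - A^2n^2/2) = d\P_{\btheta_S}/d\P_0$, where $\btheta_S = A\mathbf 1_S$ for $S\in\mathcal S$. I assume $A\ge 0$ so that $\partial\Theta(A)\subset\Theta(A)$.

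\textbf{Step 1 (minimax risk dominates the optimal Bayes risk).} Take any test $T$. Since $\partial\Theta(A)\subset\Theta(A)$, the supremum over $\Theta(A)$ is at least the average over $\bpi$. Hence
\[
\mathrm{Risk}(T,A,n)\ge \P_0(T=1)+\frac1{|\mathcal S|}\sum_{S\in\mathcal S}\P_{\btheta_S}(T=0) = \P_0(T=1)+\P_{\bpi}(T=0),
\]
where $\P_{\bpi}$ is the mixture, with $d\P_{\bpi}/d\P_0 = L_{\bpi}$. The right-hand side equals $\E_0[\mathbf 1_{\{T=1\}}+L_{\bpi}\mathbf 1_{\{T=0\}}]$. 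This is at least $\E_0\min(1,L_{\bpi})$, with equality for the likelihood ratio test by Neyman--Pearson. Taking the infimum over $T$ gives $\mathcal R_n(A)\ge \E_0\min(1,L_{\bpi})$.

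\textbf{Step 2 (pass to the truncation).} Each summand of $L_{\bpi}$ is nonnegative and the indicators $\mathbf 1_{E_S}$ are at most $1$, so $0\le L_{\bpi}^{\bE}\le L_{\bpi}$ pointwise. Since $x\mapsto\min(1,x)$ is nondecreasing, this gives $\mathcal R_n(A)\ge \E_0\min(1,Y)$ with $Y\stackrel{{\rm def.}}{=}L_{\bpi}^{\bE}$. The same pointwise bound also gives $m\stackrel{{\rm def.}}{=}\E_0 Y\le \E_0L_{\bpi}=1$.

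\textbf{Step 3 (elementary moment inequality).} Write $\min(1,Y)=Y-(Y-1)_+$. Because $m\le 1$, we have $(Y-1)_+\le (Y-m)_+$. Moreover $\E_0(Y-m)_+=\tfrac12\E_0|Y-m|$, since $Y-m$ has mean zero, so its positive and negative parts have equal expectations. Cauchy--Schwarz then gives $\tfrac12\E_0|Y-m|\le \tfrac12\sqrt{\var_0(Y)}$. Combining,
\[
\E_0\min(1,Y)\ge m-\tfrac12\sqrt{\var_0(Y)},
\]
which together with Step 2 is \eqref{eq:risk_lowerbnd_ab}.

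\textbf{Main obstacle.} There is no real analytic difficulty. The one point needing care is Step 3, where the inequality $(Y-1)_+\le (Y-m)_+$ uses $\E_0 L_{\bpi}^{\bE}\le 1$. That bound comes from the truncation only removing mass, together with $L_{\bpi}$ being a genuine likelihood ratio with $\E_0L_{\bpi}=1$. This is why the normalisation must be $A^2n^2/2$, so that each summand is exactly a likelihood ratio. If the quantity in \eqref{def:LpiE} were instead meant with a different constant, the argument would still go through provided $\E_0 L_{\bpi}^{\bE}\le 1$ is checked directly.
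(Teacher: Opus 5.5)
Your proof is correct and is essentially the paper's argument. The paper also bounds $\mathcal R_n(A)$ below by the Bayes risk $1-\E_0(1-L_{\bpi})^+=\E_0\min(1,L_{\bpi})$ and passes to $L_{\bpi}^{\bE}$ by monotonicity. It then uses subadditivity of $x\mapsto x^+$ (equivalent to your $(Y-1)_+\le (Y-m)_+$), the identity $\E_0(m-Y)^+=\tfrac12\E_0|Y-m|$, and Cauchy--Schwarz.

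Reading the normalisation as $A^2n^2/2$, consistent with \eqref{def:Lpi}, is the intended interpretation. Your explicit check that $\E_0 L_{\bpi}^{\bE}\le 1$ spells out a step the paper uses only tacitly in its subadditivity split.
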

\begin{proof}
Let us denote by $\P_{\bpi}$ the mixture measure $\int \P_{\btheta} \, d\bpi(\btheta)$ where $\bpi$ is the uniform prior on $\partial \Theta(A)$ (recall \eqref{def:partialTheta}). Using standard change of mean formula for Gaussian 
variables (see, e.g., Lemma~\ref{lem:CM} below), we see that
\begin{equation}\label{eq:Lpiform}
\frac{d \P_{\bpi}}{d\P_0} = L_{\bpi} = \frac{1}{|\mathcal S|} \sum_{\btheta \in \partial \Theta(A)} \frac {d\P_{\btheta}}{d\P_0} = \sum_{S \in \mathcal S} \exp\left(A X_S - \tfrac{A^2n}{2}\right)
\end{equation}
We can now write (see, for instance, \cite[Section~5.1]{butuceasubmatrix2013} and also the second paragraph in \S\ref{subsec:overview}):
\begin{equation}\label{eq:NPearson}
\begin{split}
\mathcal R_n(A) &\ge \P_0(L_{\bpi} > 1) + \P_{\bpi}(L_{\bpi} \le 1) \stackrel{\eqref{eq:Lpiform}}{=} \P_0(L_{\bpi} > 1) + \E_0(L_{\bpi}1_{\{L_{\bpi} \le 1\}}) \\
&= 1 -  \E_0(1 - L_{\bpi})^+
\end{split}
\end{equation}
where $x^+ \stackrel{{\rm def.}}{=} \max(x, 0)$ for any $x \in \R$. Now decomposing $L_{\bpi}$ as $L_{\bpi}^{\bE} + (L - L_{\bpi}^{\bE})$, 
which are both non-negative, and using the elementary observation that the 
function $x \mapsto x^+$ is both increasing and subadditive, we obtain
\begin{equation*}
\begin{split}
\mathcal R_n(A) \ge 1 -  (1 - \E_0(L_{\bpi}^{\bE})) -\E_0\big(\E_0(L_{\bpi}^{\bE}) - L_{\bpi}^{\bE}\big)^+ = \E_0(L_{\bpi}^{\bE}) - \frac12 \E_0\big|\E_0(L_{\bpi}^{\bE}) - L_{\bpi}^{\bE}|.
\end{split}    
\end{equation*}
From this \eqref{eq:risk_lowerbnd_ab} follows immediately upon noting that 
$\E_0\big|\E_0(L_{\bpi}^{\bE}) - L_{\bpi}^{\bE}| \le \sqrt{\var_0(L_{\bpi}^{\bE})}$ 
by the Cauchy-Schwarz inequality.
\end{proof}
To conclude the section, we record a few properties of normal variables 
which will be used frequently. These properties are classical and we omit their 
proofs.
\begin{lemma}[Standard normal tail bounds]\label{lem:normal_tailbnd}
Letting $\Phi(\cdot)$ denote the standard normal CDF, we have
\begin{equation}\label{eq:normal_tailbnd}
\frac{1}{\sqrt{2\pi}}\frac{x}{x^2 + 1}e^{-\frac{x^2}{2}} \le 1 - \Phi(x) = \Phi(-x) \le \tfrac 1{2\vee \sqrt{2\pi}x} e^{-\frac{x^2}{2}}
\end{equation}
for all $x \ge 0$.
\end{lemma}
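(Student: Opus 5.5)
The inequality in Lemma~\ref{lem:normal_tailbnd} is classical, and I will prove its two halves separately by elementary calculus, using only $\phi(x) = (2\pi)^{-1/2}e^{-x^2/2}$ and the identity $\phi'(t) = -t\phi(t)$. The symmetry identity $1-\Phi(x)=\Phi(-x)$ follows at once from the evenness of $\phi$.

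\emph{Lower bound.} Set
\[
g(x) = 1-\Phi(x) - \frac{x}{x^2+1}\phi(x).
\]
Differentiating with $\phi' = -x\phi$ gives
\[
\frac{d}{dx}\Big[\frac{x}{x^2+1}\phi(x)\Big] = \phi(x)\Big[\frac{1-x^2}{(x^2+1)^2} - \frac{x^2}{x^2+1}\Big],
\]
and hence
\[
g'(x) = \phi(x)\Big[-1 - \frac{1-x^2}{(1+x^2)^2} + \frac{x^2}{1+x^2}\Big] = -\frac{2\phi(x)}{(1+x^2)^2} < 0.
\]
Since $g(x)\to 0$ as $x\to\infty$, the function $g$ decreases to $0$, so $g\ge 0$ on $[0,\infty)$.

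\emph{Upper bound.} This splits into two cases, which together give the factor $1/(2\vee\sqrt{2\pi}x)$.

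For the Mills-ratio part, use $t/x\ge 1$ on $[x,\infty)$:
\[
1-\Phi(x) = \int_x^\infty \phi(t)\,dt \le \int_x^\infty \frac{t}{x}\phi(t)\,dt = \frac{\phi(x)}{x} = \frac{e^{-x^2/2}}{\sqrt{2\pi}\,x}.
\]

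For the $\tfrac12$ part, set $k(x) = \tfrac12 e^{-x^2/2} - (1-\Phi(x))$. Then $k(0)=0$, $k(\infty)=0$, and
\[
k'(x) = e^{-x^2/2}\Big(\frac{1}{\sqrt{2\pi}} - \frac{x}{2}\Big).
\]
So $k$ increases on $[0,\sqrt{2/\pi}]$ and decreases afterwards. A function that rises from $0$ and then decreases to the limit $0$ stays non-negative, so $k\ge 0$ on $[0,\infty)$.

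Taking the smaller of the two upper bounds gives the stated right-hand side: the $\tfrac12$ bound is the sharper one when $\sqrt{2\pi}x\le 2$, and the Mills bound otherwise. The main (minor) obstacle is the derivative algebra in the lower bound; the rest is routine.
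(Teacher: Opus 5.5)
Your proof is correct and complete. The identity $g'(x) = -2\phi(x)/(1+x^2)^2$ checks out, and so do the Mills-ratio bound and the rise-then-fall argument for $k$ with $k(0)=k(\infty)=0$. Since $1/(2\vee\sqrt{2\pi}x)=\min\{\tfrac12,\,1/(\sqrt{2\pi}x)\}$, the two upper bounds combine exactly as you say, and the $\tfrac12$ branch covers $x=0$, where the Mills bound is vacuous.

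The paper gives no proof of this lemma; it lists it among classical facts whose proofs are omitted. Your argument supplies the standard verification.
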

\begin{lemma}[Tail probability for bivariate normals]\label{lem:bvn}
Let $(X, Y)$ be distributed as a (bi-variate) normal vector under $\P$ with common mean 0, common variance $1$ and correlation coefficient $\rho$. Then for any $a, b \in \R$, we have
\begin{equation}\label{eq:bvn}
\P(X > a, Y > b) = \P(X > a) \P(Y > b) + \frac 1{2\pi}\int_0^{\arcsin{\rho}} \exp\big(-\tfrac{a^2 + b^2 - 2ab sin\theta}{2\cos^2\theta} \big) d\theta.
\end{equation}
\end{lemma}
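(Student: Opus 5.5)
This is Lemma~\ref{lem:bvn}, the classical tail identity for a standard bivariate normal pair; we prove it by differentiating in the correlation parameter. Write $F(\rho) = \P_\rho(X > a, Y > b)$ for the joint law with correlation $\rho \in (-1,1)$, and let
\[
\varphi_\rho(x,y) = \frac{1}{2\pi\sqrt{1-\rho^2}}\exp\Big(-\frac{x^2+y^2-2\rho xy}{2(1-\rho^2)}\Big)
\]
be its density. At $\rho = 0$ the variables are independent, so $F(0) = \P(X>a)\P(Y>b)$. Hence it suffices to show
\[
F(\rho) - F(0) = \frac{1}{2\pi}\int_0^{\arcsin\rho} \exp\Big(-\frac{a^2+b^2-2ab\sin\theta}{2\cos^2\theta}\Big)\, d\theta .
\]

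The key tool is Plackett's identity $\partial_\rho \varphi_\rho = \partial_x\partial_y \varphi_\rho$. One can verify it directly by differentiating the explicit density. Alternatively, it follows from the Fourier representation: the characteristic function is $\exp(-\tfrac12(s^2+t^2+2\rho st))$, whose $\rho$-derivative is $-st$ times itself, and this corresponds to $\partial_x\partial_y$ on the density side. Differentiating under the integral sign, which is justified by Gaussian decay locally uniformly in $\rho \in (-1,1)$, gives
\[
F'(\rho) = \int_a^\infty\!\!\int_b^\infty \partial_x\partial_y \varphi_\rho \, dy\, dx = \varphi_\rho(a,b),
\]
since the boundary terms at infinity vanish.

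Next substitute $\rho = \sin\theta$ with $\theta \in (-\pi/2, \pi/2)$, so that $d\rho = \cos\theta\, d\theta$ and $\sqrt{1-\rho^2} = \cos\theta$. Then
\[
F(\rho) - F(0) = \int_0^{\arcsin\rho} \varphi_{\sin\theta}(a,b)\cos\theta\, d\theta = \frac{1}{2\pi}\int_0^{\arcsin\rho}\exp\Big(-\frac{a^2+b^2-2ab\sin\theta}{2\cos^2\theta}\Big)\, d\theta .
\]
This is exactly \eqref{eq:bvn}; note that the integral runs backwards, and so carries the right sign, when $\rho<0$.

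The endpoint cases $\rho = \pm1$ follow by continuity in $\rho$. Both sides are continuous: on the left because the law converges weakly to a degenerate law that puts no mass on the boundary lines, and on the right because the integrand is bounded by $1$ near $\theta = \pm\pi/2$. The only step needing any care is the justification of Plackett's identity together with the interchange of differentiation and integration, and both are routine.
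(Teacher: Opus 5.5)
Your proof is correct and complete. Plackett's identity $\partial_\rho\varphi_\rho=\partial_x\partial_y\varphi_\rho$ gives $F'(\rho)=\varphi_\rho(a,b)$, and the substitution $\rho=\sin\theta$ turns $\int_0^\rho\varphi_r(a,b)\,dr$ into exactly the stated integral. The endpoints $\rho=\pm1$ are handled correctly by continuity: the integrand is bounded by $1$, since $a^2+b^2-2ab\sin\theta\ge(|a|-|b|)^2\ge0$.

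The paper gives no proof of this lemma; it calls it classical and omits it. Your argument is the standard derivation that the paper implicitly relies on. The endpoint case $\rho=1$ is actually used there, because the sum in \eqref{eq:var_bnd} includes $k=\ell=n$.
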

The final result in the list is the ``change of mean'' formula for normal 
distribution.
\begin{lemma}[Change of mean formula]\label{lem:CM}
Let $\P$ denote the law of $n$ independent standard normal variables. Also for 
any $\mathbf a = (a_1, \ldots, a_n) \in \R^n$, let $\P_{\mathbf a}$ denote the law 
of $n$ independent normal variables with means $a_1, \ldots, a_n$ and common 
variance $1$. Then for ($\P$ almost) every $\mathbf y = (y_1, \ldots, y_n) \in \R^n$, we have $\frac{d\P_{\mathbf a}}{d\P}(\mathbf y) = \exp( \sum_{1 \le i \le n} a_i y_i - \frac12\sum_{1 \le i \le n} y_i^2)$.
\end{lemma}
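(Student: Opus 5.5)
This is the change-of-mean formula (Lemma~\ref{lem:CM}), and I would prove it by a direct density computation.

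Both $\P$ and $\P_{\mathbf a}$ are product measures on $\R^n$ that are absolutely continuous with respect to Lebesgue measure, with everywhere positive densities. Their densities are
\begin{equation*}
\varphi(\mathbf y)=(2\pi)^{-n/2}\exp\Big(-\tfrac12\sum_i y_i^2\Big),
\qquad
\varphi_{\mathbf a}(\mathbf y)=(2\pi)^{-n/2}\exp\Big(-\tfrac12\sum_i (y_i-a_i)^2\Big).
\end{equation*}
Since $\varphi>0$, the Radon--Nikodym derivative exists and equals the ratio $\varphi_{\mathbf a}/\varphi$ almost everywhere. For any Borel set $B$ we have $\P_{\mathbf a}(B)=\int_B (\varphi_{\mathbf a}/\varphi)\,d\P$, and uniqueness of the density gives the claim $\P$-a.e.

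Expanding $(y_i-a_i)^2=y_i^2-2a_iy_i+a_i^2$, the normalizing constants and the $y_i^2$ terms cancel. This leaves
\begin{equation*}
\frac{d\P_{\mathbf a}}{d\P}(\mathbf y)=\exp\Big(\sum_{1\le i\le n} a_i y_i-\tfrac12\sum_{1\le i\le n} a_i^2\Big).
\end{equation*}

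Note that the correct subtracted term is $\frac12\sum_i a_i^2$, not $\frac12\sum_i y_i^2$ as displayed in the statement; the latter is evidently a typo. The corrected form is the one consistent with the usage in \eqref{def:LpiE}. There, for $S\in\mathcal S$ and $a_{ij}=A\mathbf 1_S(i,j)$, one gets $\exp(AX_S-A^2|S|/2)$ with $|S|=n^2$, matching \eqref{def:Lpi}.

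There is no real obstacle in this argument; the only point needing care is recording the corrected quadratic term.
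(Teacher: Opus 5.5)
Your argument is correct: it is the standard density-ratio computation, and the paper gives no proof to compare against, since it calls the fact classical and omits it. You are also right that the displayed $-\frac12\sum_i y_i^2$ must read $-\frac12\sum_i a_i^2$. As printed, already for $n=1$ and $\mathbf a=0$ the right-hand side has $\P$-mean $1/\sqrt2\neq 1$. The corrected form is the one the paper actually uses in \eqref{def:Lpi} and \eqref{eq:LpiT0}. One small point: \eqref{def:LpiE} itself carries a second typo, $\tfrac{A^2n}{2}$ in place of $\tfrac{A^2n^2}{2}$. So \eqref{def:Lpi}, not \eqref{def:LpiE}, is the right place to check consistency.
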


\section{Proofs in the above-boundary regime}\label{sec:dense_above}
In this section we will prove our results for the above-boundary regime $\delta > 0$, i.e., Theorems~\ref{thm:dense_above_boundary} and \ref{thm:sparse_riskbnd}. 
Their proofs are given in two separate subsections.
\subsection{Proof of Theorem~\ref{thm:dense_above_boundary}}\label{subsec:dense_above}
Here we will give the proof of 
Theorem~\ref{thm:dense_above_boundary} which is naturally split into the proofs of 
the corresponding upper and lower bound respectively.

\smallskip

\noindent{{\em Proof of the upper bound.}} For the upper bound, consider the sum test (cf.~\eqref{def:sumscantest}) given by
\begin{equation*}
T = T_{{\rm sum}}^{N \frac{n^{\delta}}2} = \mathbf{1}_{\big\{X_{[N]\times[N]} > N \frac{n^{\delta}}2\big\}}.    
\end{equation*}
Since $\frac{X_{[N]\times[N]}}{N} \sim N(0, 1)$ under $\P_0$, we have
\begin{equation}\label{eq:sumdense_upperP0}
\P_0(T = 1) = 1 - \Phi(\tfrac{n^{\delta}}2) \stackrel{\eqref{eq:normal_tailbnd}}{\le} e^{-\frac{n^{2\delta}}8}.
\end{equation}
On the other hand, $\tfrac{X_{[N]\times[N]}}{N} \sim N(\mu_{\btheta}, 1)$ 
under $\P_{\btheta}$ with $\mu_{\btheta} \ge \tfrac{n^2A}{N}$ and $A = A^\ast(\delta, \alpha) = n^{-(1 - \alpha - \delta)}$ for any $\btheta \in \Theta(A)$. Therefore
\begin{equation}\label{eq:sumdense_upperP1}
\P_{\btheta}(T = 0) = \Phi\big(\tfrac{n^{\delta}}2 - \tfrac{n^2A}{N}\big) \stackrel{\eqref{eq:mnregime}}{\le} \Phi(-\tfrac{n^{\delta}}2) \stackrel{\eqref{eq:normal_tailbnd}}{\le} e^{-\frac{n^{2\delta}}8}
\end{equation}
Together \eqref{eq:sumdense_upperP0} and \eqref{eq:sumdense_upperP1} imply (cf.~\eqref{eqn:bayes_risk})
\begin{equation*}
\limsup_{n\rightarrow \infty}\frac{\log{\mathrm{Risk}(T,A,n)}}{n^{2\delta}} \le  -\frac18
\end{equation*}
which is the upper bound in \eqref{eq:dense_riskbnd1}.

\smallskip

\noindent{{\em Proof of the lower bound.}} We now give the proof for the lower 
bound which is more involved. One natural approach would be to truncate the 
likelihood ratio by the likely event $\{T = 0\}$ (under $\P_0$) and use Lemma~\ref{lem:risk_lowerbnd_ab}. However, the two terms on 
the right-hand side of \eqref{eq:risk_lowerbnd_ab} are of similar (stretched 
exponentially small) order, which makes it quite delicate to bound their difference {\em from below}. 
Instead, we take the more direct route of bounding $\P_0(L_{\bpi} > 1)$ or 
$\P_{\bpi}(L_{\bpi} > 1)$ (see \eqref{eq:NPearson}). To this end we will use a 
truncated second moment method applied directly to the event $\{L_{\bpi} > 1\}$. 
More precisely, we have
\begin{equation}\label{eq:trunsecmomineq}
\P_0(L_{\bpi} > 1) \ge \frac{(\E_0(L_{\bpi} 1_{\{L_{\bpi} > 1, T = 0\}}))^2}{\E_0(L_{\bpi}^21_{\{T = 0\}})}
\end{equation}
which follows from the Cauchy-Schwarz inequality. Now note that
\begin{equation*}
\begin{split}
\E_0(L_{\bpi}1_{\{L_{\bpi} > 1, T = 0\}}) = \P_{\bpi}(L_{\bpi} > 1, T = 0) \ge \P_{\bpi}(T = 0) - \P_{\bpi}(L_{\bpi} \le 1) = \Phi(-\tfrac{n^{\delta}}2) - \P_{\bpi}(L_{\bpi} \le 1).
\end{split}
\end{equation*}
So either 
\begin{equation*}
\P_{\bpi}(L_{\bpi} \le 1) \ge \frac12 \Phi(-\tfrac{n^{\delta}}2)
\end{equation*}
in which case we directly obtain
\begin{equation}\label{eq:sum_denselower}
\liminf_{n\rightarrow \infty}\frac{\log{\mathcal R_n(A)}}{n^{2\delta}} \ge  -\frac18,
\end{equation}
i.e., the required lower bound in \eqref{eq:dense_riskbnd1} via the tail bound 
\eqref{eq:normal_tailbnd} (and in view of \eqref{eq:NPearson}) {\em or} 
\begin{equation}\label{eq:truncfirstmoment}
\E_0(L_{\bpi} 1_{\{L_{\bpi} > 1, T = 0\}}) \ge \frac 12\Phi(-\tfrac{n^{\delta}}2).
\end{equation}
So let us assume that the above bound holds.

Next we want to show that \begin{equation}\label{eq:truncatedLpisq}
\E_0(L_{\bpi}^21_{\{T = 0\}}) \le \Phi(-\tfrac{3n^{\delta}}2) \exp((1 + o(1)) n^{2\delta}) \stackrel{\eqref{eq:normal_tailbnd}}{\le} e^{-(1 + o(1))\frac{n^{2\delta}}8}
\end{equation}
which then yields the desired lower bound \eqref{eq:sum_denselower}, together with 
\eqref{eq:truncfirstmoment}, via the inequality \eqref{eq:trunsecmomineq}. Rest of the proof is devoted to verifying 
\eqref{eq:truncatedLpisq}. We need the following result.

\vspace{0.3cm}

\noindent{{\bf Claim.}} Given any $B \in [0, \frac1n]$ and $n, N \ge 1$ satisfying $n \le \frac N{3}$, let us consider the function $f: [0, n]^2 \to [0, \infty)$ defined as 
\begin{equation*}
f(x, y) = x\log\tfrac{n^2}{Nx} + y\log\tfrac{n^2}{Ny} + 2n(h(\tfrac{n - x}n) + h(\tfrac{n - y}n)) + N (h(\tfrac{N - 2n + x}N) + h(\tfrac{N - 2n + y}N)) + Bxy
\end{equation*}
(cf.~\eqref{eq:factorial_dense} and recall that $0\log \infty = 0$). Then $f$ is continuous, and attains its maximum 
at some point in the rectangle $[\frac {n^2}N, \frac {n^2}N(1 + C \frac n N)]^2$.

\vspace{0.2cm}

We will validate this Claim at the end and continue with the proof of 
\eqref{eq:truncatedLpisq}. Let us start with (recall \eqref{def:Lpi} and that $A = n^{-(1 - \alpha - \delta)}$)
\begin{equation}\label{eq:LpiT0}
\begin{split}
 \E_0(L_{\bpi}^21_{\{T = 0\}}) &\stackrel{\eqref{def:Lpi}}{=} \frac 1{|\mathcal S|^2} \sum_{S, S' \in \mathcal S}e^{A(X_S + X_{S'}) - A^2n} \, \P_0 \big( X_{[N] \times [N]} \le  N \tfrac{n^\delta}2 \big)\\
 &\stackrel{{\rm Lem}~\ref{lem:CM}}{=} \frac 1{|\mathcal S|^2} \sum_{S, S' \in \mathcal S} e^{A^2|S \cap S'|} \, \P_0 \big( X_{[N] \times [N]} \le  N \tfrac{n^\delta}2 - 2An^2 \big)\\ 
 &= \Phi(-\tfrac{3n^{\delta}}2) \frac 1{|\mathcal S|^2} \sum_{S, S' \in \mathcal S} e^{A^2|S \cap S'|} = \Phi(-\tfrac{3n^{\delta}}2) \sum_{0 \le k, \ell \le n} \tfrac{N_{k, \ell}}{|\mathcal S|} e^{A^2k \ell}
\end{split}
\end{equation}
where 
\begin{equation}\label{def:Nkl}
N_{k, \ell} 
\stackrel{{\rm def.}}{=} {n \choose k} {N - n \choose n - k} 
{n \choose \ell}{N - n \choose n - \ell}
\end{equation}
gives the number of submatrices $S'$ intersecting a given submatrix $S$ at a $k \times \ell$ submatrix. 
Since $\alpha + \delta \le \frac 12$, i.e., $A^2 \le \frac1n$, we obtain from 
\eqref{eq:factorial_dense} and our Claim (applied in the second step below) that
\begin{equation*}
\sum_{0 \le k, \ell \le n} \tfrac{N_{k, \ell}}{|\mathcal S|} e^{A^2k \ell} \le n^2 \max_{1 \le k, \ell \le n} \tfrac{N_{k, \ell}}{|\mathcal S|} e^{A^2k \ell} 
\le n^2 
\exp((1 + o(1))\tfrac{A^2n^4}{N^2}) \stackrel{\eqref{eq:mnregime}}{\le} \exp((1 + o(1))n^{2\delta}).
\end{equation*}
Plugging this bound into \eqref{eq:LpiT0} we obtain \eqref{eq:truncatedLpisq}. It 
remains to verify our claim.

\vspace{0.1cm}

\noindent{{\em Proof of the Claim.}} The continuity of $f$ is clear from the definition. The partial derivatives of $f$ 
are given by
\begin{equation*}
\frac{\partial f}{\partial x} = \log\tfrac{(n-x)^2}{x(N - 2n + x)} + By \mbox{ and } \frac{\partial f}{\partial y} = \log\tfrac{(n-y)^2}{x(N - 2n + y)} + Bx.
\end{equation*}
Since $\max(Bx, By) \in [0, 1]$ (recall that $(x, y) \in [0, n]^2$ and $B \in [0, 
\frac1n]$), the above gives us that the point of maximum $(x^\ast, y^\ast)$ for 
$f$ is unique and satisfies
\begin{equation*}
\tfrac{n^2}N \le x^\ast  \le \tfrac{n^2}N(1 + C By^\ast) \mbox{ and } \tfrac{n^2}N \le y^\ast  \le \tfrac{n^2}N(1 + C Bx^\ast).
\end{equation*}
In particular, we get that both $x^\ast$ and $y^\ast$ are bounded above by 
$C\tfrac{n^2}N$. Re-plugging these bounds into the previous display we obtain the 
required result. \qed

\subsection{Proofs of Theorem~\ref{thm:sparse_riskbnd}}\label{subsec:sparse}
This subsection is {\em primarily} devoted to the proof of 
Theorem~\ref{thm:sparse_riskbnd}, part~1. In the end we explain how to adapt parts 
of the argument to deduce part~2.

\smallskip

\noindent{{\em Proof of the upper bound.}} We will frequently use the following asymptotic expression for $|\mathcal S| = {\binom{N}
{n}}^2$ which is valid for {\em all} $\alpha > 0$ (recall from \eqref{eq:mnregime} that $N = \lfloor n^{1 + \alpha} \rfloor$) and follows from the Stirling's formula as in \eqref{eq:stirlingmk}:
\begin{equation}\label{eq:Scardinality}
\log |\mathcal S| = (1 + o(1)) \, 2\alpha n \log n.
\end{equation}
Clearly, with $A = A^\ast(\delta, \alpha) = \sqrt{\frac{4(1+\delta)\alpha \log n}{n}}$, we have
\begin{equation}\label{eq:mathcalSA}
\log |\mathcal S| = 
(1 + o(1)) \tfrac{A^2}{2(1 + \delta)} \, n^2.
\end{equation}
Our analysis in the proof of Theorem~\ref{thm:sparse_riskbnd} (part~1) is valid for {\em any} $\alpha, \delta > 0$ and $A$ as above.

\smallskip

\noindent{{\em Proof of the upper bound.}} Let us start with the upper bound and 
consider the scan test (cf.~\eqref{def:sumscantest}) with a carefully chosen threshold, namely
\begin{equation}\label{def:Tscan}
T = T_{{\rm scan}}^{\sqrt{2\tau^*\log(|\mathcal{S}|)}} = \mathbf{1}_{\big\{\max\limits_{S\in 
\mathcal{S}}\frac{X_{S}}{n}>\sqrt{2\tau^*\log(|\mathcal{S}|)}\big\}}\,\, 
\mbox{where }  \tau^\ast = \frac{(2 + \delta)^2}{4(1 + \delta)}.
\end{equation}
Note that $\tau^\ast - 1 = \frac{\delta^2}{4(1 + \delta)} (> 0)$; cf.~\eqref{eq:sparse_riskbnd}.
Since each $\frac{X_S}{n} \sim N(0, 1)$ under $\P_0$, we can write, via a simple union bound, 
\begin{equation*}
\P_0(T = 1) \le |\mathcal S| \, \big(1 - \Phi(\sqrt{2\tau^*\log(|\mathcal{S}|)})\big).
\end{equation*}
Plugging the upper tail bound from \eqref{eq:normal_tailbnd} into the right-hand side above, we obtain
\begin{equation}\label{eq:scan_upperP0}
\P_0(T = 1) \le 
e^{-(\tau^\ast - 1) \log |\mathcal S|}.
\end{equation}
On the other hand, for any $\btheta \in \Theta(A)$, we have
\begin{equation*}
\P_{\btheta}(T = 0) \le  \P_{\btheta}\left(\tfrac{X_{S(\btheta)}}{n}  \le \sqrt{2\tau^*\log(|\mathcal{S}|)} \right)
\end{equation*}
where $S(\btheta) \in \mathcal S$ is such that $\theta_{ij} \ge A$ for all $(i, j) 
\in S(\btheta)$ and $= 0$ otherwise. Since $\frac{X_{S(\btheta)}}{n} \sim N(\mu, 1)$ under $\P_{\btheta}$ with $\mu \ge An$, we get
\begin{equation}\label{eq:scan_upperP2}
\P_{\btheta}(T = 0) \le  \Phi\left(\sqrt{2\tau^*\log(|\mathcal{S}|)} - A n \right).
\end{equation}
Now by \eqref{eq:mathcalSA} and since $\tau^\ast = \frac{(2 + \delta)^2}{4(1 + \delta)}$ (see \eqref{def:Tscan}), we can write
\begin{equation*}
\begin{split}
&\sqrt{2\tau^*\log(|\mathcal{S}|)} - An = -(1 + o(1))\left(\sqrt{2(1 + \delta)} - \frac{2 + \delta}{\sqrt{2(1 + \delta)}}\right) \sqrt{\log(|\mathcal{S}|)}\\
=& -(1 + o(1)) \frac{\delta}{\sqrt{2(1 + \delta)}} \sqrt{\log(|\mathcal{S}|)} = -(1 + o(1)) \sqrt{2(\tau^\ast - 1)} \sqrt{\log(|\mathcal{S}|)}.
\end{split}
\end{equation*}
Plugging this into \eqref{eq:scan_upperP2} and using \eqref{eq:normal_tailbnd} 
again, we obtain
\begin{equation*}
\P_{\btheta}(T = 0) \le e^{-(1 + o(1))(\tau^\ast - 1)\log |\mathcal S|}
\end{equation*}
for any $\btheta \in \Theta(A)$. Together with \eqref{eq:scan_upperP0} (recall the 
definition of $\tau^\ast$ from \eqref{def:Tscan}), this implies
\begin{equation*}
\limsup_{n\rightarrow \infty}\frac{\log{\mathrm{Risk}(T,A,n)}}{\log{|\mathcal{S}|}} \le  -(\tau^*-1) = -\frac{\delta^2}{4(1+\delta)}
\end{equation*}
which is the required upper bound in \eqref{eq:sparse_riskbnd}.

\smallskip

\noindent{{\em Proof of the lower bound.}} For the lower bound, one can try the method in the previous subsection and consider 
the second moment of the likelihood ratio $L_{\bpi}$ truncated w.r.t. to 
the likely event $\{T = 0\}$ (under $\P_0$). Unfortunately, the second moment 
blows up with $n$ in this case. Also any approach exploiting 
Lemma~\ref{lem:risk_lowerbnd_ab} becomes ineffective due to the same reason. 
Consequently, we need to find a way to bound $\P_0(L_{\bpi} > 1)$ from below 
(cf.~\eqref{eq:NPearson}) that does not involve the second moment (truncated or 
otherwise) of $L_{\bpi}$. To this end, using the simple observation $\sum_{S \in \mathcal S} \exp(A X_S) \ge \exp(A\max\limits_{S \in \mathcal S}X_S)$, we can write
\begin{equation}\label{eq:LR_sparse}
\begin{split}
\P_0(L_{\bpi}>1) &\ge \P_0\big(\exp(A \max_{S\in \mathcal{S}} X_S)>\exp\big(\tfrac{A^2n^2}2+\log{|\mathcal{S}|}\big)\big) \\
&= \P_0\big(A\max\limits_{S\in \mathcal{S}}X_S > \tfrac{A^2n^2}{2} +\log{|\mathcal{S}|}\big) = \P_0\big(\max\limits_{S\in \mathcal{S}} \tfrac{X_S}{n} > \tfrac{An}{2} + \tfrac{\log{|\mathcal{S}|}}{An}\big)\\ &\stackrel{\eqref{eq:mathcalSA} + \eqref{def:Tscan}}{=} \P_0\big(\max\limits_{S\in \mathcal{S}} \tfrac{X_S}{n} > (1 + o(1)) \, \sqrt{2\tau^*\log(|\mathcal{S}|)} \big)
\end{split}  
\end{equation}
(cf.~\eqref{def:Tscan}). Therefore it suffices to bound the probability in the 
last line above and we will use second moment method for that. So denoting 
\begin{equation*}
Y = \sum_{S \in \mathcal S} 1_{\big\{\tfrac{X_S}{n} > (1 + o(1))\sqrt{2\tau^\ast \log |\mathcal S|}\big\}},
\end{equation*}
we will bound
\begin{equation}\label{eq:secmomineq}
\P_0\Big(\max\limits_{S\in \mathcal{S}} \tfrac{X_S}{n} > (1 + o(1)) \, \sqrt{2\tau^*\log(|\mathcal{S}|)} \Big) = \P_0(Y > 0) \ge \frac{(\E_0(Y))^2}{\E_0(Y^2)}.
\end{equation}
Since $\frac{X_S}{n} \sim N(0, 1)$ under $\P_0$ for each $S \in \mathcal S$, we 
have
\begin{equation}\label{eq:firstmom}
\E_0(Y) = |\mathcal S| \, \big(1 - \Phi(\sqrt{2\tau^*\log(|\mathcal{S}|)})\big) = e^{-\log |\mathcal S|(\tau^\ast - 1)(1 + o(1))}
\end{equation} 
where we used the lower tail bound in \eqref{eq:normal_tailbnd}. On the other hand, we 
have
\begin{equation}\label{eq:secmomexp}
    \E_0(Y^2)  = |\mathcal S| \sum_{0 \le k, \ell \le n} N_{k, \ell} \, p_{k\ell}
\end{equation}
where $N_{k, \ell}$ is as in \eqref{def:Nkl} and
\begin{equation*}
p_{k\ell} \stackrel{{\rm def.}}{=} \P_0\big(\tfrac{X_S}{n} > (1 + o(1))\sqrt{2\tau^\ast \log |\mathcal S|},  \tfrac{X_{S'}}{n} > (1 + o(1))\sqrt{2\tau^\ast \log |\mathcal S|} \, \big)    
\end{equation*}
with {\em any} $S, S'$ satisfying $|S \cap S'| = k\ell$ and $o(1)$ denoting 
the same function in both cases. The probability depends on $S$ and $S'$ only 
through $|S \cap S'| = k\ell$ since $(\tfrac{X_S}{n}, \, \tfrac{X_{S'}}{n})$ is a 
bi-variate normal random vector with common mean $0$, common variance 1 and 
correlation coefficient $\rho = \frac{k\ell}{n^2}$. Thus using Lemma~\ref{lem:bvn} 
(with $a = (1 + o(1))\sqrt{2\tau^\ast \log |\mathcal S|}$), we get
\begin{equation*}
p_{k\ell} \stackrel{\eqref{eq:normal_tailbnd} + \eqref{eq:bvn}}{\le} \exp\Big(- \tfrac{2\tau^\ast \log |\mathcal S|}{1 + \frac{k\ell}{n^2}} (1 + o(1))\Big).
\end{equation*}
Using the transformation of variables $k \to \frac{k}{n} = q$, $\ell \to \frac{\ell}{n} = r$ and $k\ell \to \frac{k\ell}{n^2} = qr$, where $q, r \in [0, 1]$, we can rewrite 
the above as 
\begin{equation*}
p_{k\ell} \le \exp\Big(- \tfrac{2\tau^\ast \log |\mathcal S|}{1 + qr} (1 + o(1))\Big).
\end{equation*}
On the other hand, Corollary~\ref{cor:factorial} (recall that $N = \lfloor n^{1 + \alpha}\rfloor$) gives us
\begin{equation*}
N_{k, \ell}\le |\mathcal S| e^{C n} e^{- (k + \ell)\alpha \log n} = |\mathcal S| e^{C n} e^{- (q + r)\alpha n \log n} \le |\mathcal S| e^{C n} e^{- 2\sqrt{qr}\,\alpha n \log n}
\end{equation*}
where in the final step we used the standard inequality $q + r \ge 2\sqrt{qr}$. 
Now plugging the previous two bounds into the right-hand side of \eqref{eq:secmomexp} and using \eqref{eq:Scardinality} 
we obtain
\begin{equation}\label{eq:secmombnd}
\begin{split}
\E_0[Y^2] 
&\le e^{Cn} n^2 \exp\big\{-\log |\mathcal S| \inf_{0\le q, r \le 1}( \tfrac{2\tau^\ast}{1 + qr}  + \sqrt{qr} - 2) (1 + o(1) \big\}\\
&\:\:= e^{Cn} n^2\exp\big\{-\log |\mathcal S| \inf_{0\le s \le 1}( \tfrac{2\tau^\ast}{1 + s^2}  + s - 2) (1 + o(1) \big\}.
\end{split} 
\end{equation}
Observe that, for any $s \in [0, 1]$ and since $\tau^\ast \ge 1$,
\begin{equation*}
\tfrac{2\tau^\ast}{1 + s^2}  + s - 2 = (\tau^\ast - 1) + \tfrac{1 - s^2}{1 + s^2} \tau^\ast - (1 - s) = (\tau^\ast - 1) + (1 - s)(\tfrac{1 + s}{1 + s^2} \tau^\ast - 1) \ge \tau^\ast - 1 (> 0).
\end{equation*}
Hence from \eqref{eq:secmombnd}, we get
\begin{equation*}
\E_0(Y^2) \le e^{-\log |\mathcal S|(\tau^\ast - 1)(1 + o(1))}.
\end{equation*}
Substituting this and the lower bound on the first moment from \eqref{eq:firstmom} into the 
right-hand side of \eqref{eq:secmomineq}, we obtain
\begin{equation*}
\P_0(Y>0) \ge e^{-\log |\mathcal S|(\tau^\ast - 1)(1 + o(1))}.
\end{equation*}
Therefore, in view of \eqref{eq:LR_sparse}--\eqref{eq:secmomineq} (as well as 
\eqref{eq:NPearson}) and the definition of $\tau^\ast$ in \eqref{def:Tscan}, we get
\begin{equation*}
\liminf_{n\rightarrow \infty}\frac{\log{\mathcal R_n(A)}}{\log{|\mathcal{S}|}} \ge -\frac{\delta^2}{4(1+\delta)}
\end{equation*}
which is the desired lower bound in \eqref{eq:sparse_riskbnd}, thus completing the proof of Theorem~\ref{thm:sparse_riskbnd}. 

\medskip

We now show how to adapt the above argument in order to deduce part~2 of 
Theorem~\ref{thm:sparse_riskbnd}.

\vspace{0.15cm}

\noindent{\em Proof of Theorem~\ref{thm:sparse_riskbnd}, part~2.} We first 
outline the proof of the upper bound. To this end let us consider the test 
(cf.~\eqref{def:Tscan})
\begin{equation*}
T =  T_{{\rm scan}}^{\frac{n^{\alpha + \delta}}{2}} = \mathbf{1}_{\big\{\max\limits_{S\in 
\mathcal{S}}\frac{X_{S}}{n} > \frac{n^{\alpha + \delta}}{2}\big\}}.
\end{equation*}
From arguments similar to those used for deriving \eqref{eq:scan_upperP0}, we 
obtain in this case,
\begin{equation*}
\P_0(T = 1) \le |\mathcal S| e^{-(1 + o(1)) \frac{n^{2(\alpha + \delta)}}{8}} \stackrel{\eqref{eq:Scardinality}}{\le} e^{-(1 + o(1)) \frac{n^{2(\alpha + \delta)}}{8}}.
\end{equation*}
Notice that we needed the condition $\alpha + \delta > \frac12$ in the last step. 
As to the type-II error, we obtain as in \eqref{eq:scan_upperP2} (with $A = A^{\ast}_{{\rm BI}}(\delta, \alpha) = n^{-(1 - \alpha - \delta)}$),
\begin{equation*}
\P_{\btheta}(T = 0) \le \Phi\big(\tfrac{n^{\alpha + \delta}}{2} - A n\big) = \Phi\big(-\tfrac{n^{\alpha + \delta}}{2}\big) \stackrel{\eqref{eq:normal_tailbnd}}{\le} e^{-(1 + o(1)) \frac{n^{2(\alpha + \delta)}}{8}}
\end{equation*}
for all $\btheta \in \Theta(A)$. Together the last two displays give us
\begin{equation*}
\limsup_{n\rightarrow \infty}\frac{\log{\mathrm{Risk}(T,A,n)}}{n^{2(\alpha + \delta)}} \le  - \frac18.
\end{equation*}

\smallskip

For the lower bound, we start as in \eqref{eq:LR_sparse} and obtain
\begin{equation*}
\begin{split}
\P_0(L_{\bpi}>1) &\ge \P_0\big(\max\limits_{S\in \mathcal{S}} \tfrac{X_S}{n} > \tfrac{An}{2} + \tfrac{\log{|\mathcal{S}|}}{An}\big) \stackrel{\eqref{eq:Scardinality}}{=} \P_0\big(\max\limits_{S\in \mathcal{S}} \tfrac{X_S}{n} > (1 + o(1)) \, \tfrac{n^{\alpha + \delta}}2 \big).
\end{split}  
\end{equation*}
In the last step we also used that $A = n^{-(1 - \alpha - \delta)}$ with 
$\alpha + \delta > \frac12$. Now writing $\tfrac{n^{\alpha + \delta}}2 = \sqrt{2\tau_n^\ast \log |\mathcal S|}$, where $\tau_n^\ast = (1 + o(1)) \tfrac{n^{2(\alpha + \delta) - 1}}{16 \log n}$ is bounded away from 1 (cf.~below \eqref{eq:secmombnd}), the remainder of the proof works exactly as in 
the proof of Theorem~\ref{thm:sparse_riskbnd}.
\qed

\section{Proofs in the below-boundary regime}\label{sec:below}
In this section we prove our results for the below-boundary regime $\delta < 0$, 
namely Theorems~\ref{thm:dense_below_boundary} and 
\ref{thm:sparse_below_boundary}. Unlike the stretched and super-exponential rates 
in the previous section which are contributed by a single dominant term in the 
squared likelihood ratio (for the lower bounds), here we see power law behaviors 
emerging from the agglomeration of several terms.

\subsection{Proof of 
Theorem~\ref{thm:dense_below_boundary}}\label{subsec:dense_below}
Let us give the proof of the upper bound (on the risk) first.

\smallskip

\noindent{{\em Proof of the upper bound.}} Taking cue from the proof of the 
corresponding part for the above-boundary regime in Section~\ref{sec:dense_above}, 
let us consider the sum test 
\begin{equation*}
T = T_{{\rm sum}}^0 = \mathbf{1}_{\{X_{[N]\times[N]} > 0\}}.
\end{equation*}
It is immediate that $\P_0(T=1)=\frac{1}{2}$. Moreover, for any $\btheta\in \Theta(A)$ with $A = A^\ast(\delta, \alpha) = n^{-(1 - \alpha - \delta)}$, one has (cf.~\eqref{eq:sumdense_upperP1})
\begin{equation}\label{eq:Riskupperbb_dense}
\P_{\btheta}(T=0) \leq \Phi(- n^{\delta}) \stackrel{(\delta < 0)}{\leq} \frac{1}{2} - c n^{\delta}.
\end{equation}
Combining the above two, we obtain
\begin{equation*}
\mathrm{Risk}(T, A, n) \le 1 - cn^{\delta}
\end{equation*}
which is the desired upper bound (on $\mathcal R_n(A)$) in 
\eqref{eq:dense_below_boundary}.

\smallskip

\noindent{{\em Proof of the lower bound.}} We will use 
Lemma~\ref{lem:risk_lowerbnd_ab} {\em without} any truncation. To this end let us 
write, in view of \eqref{def:Lpi},
\begin{equation*}
\E_0\left(L_{\bpi}^2\right)=\E(\exp(A^2W_1W_2)),
\end{equation*}
where $W_1, W_2 \sim \mathrm{HG}(N,n,n)$ are i.i.d. Hypergeometric variables under 
$\P$. Now using the convex stochastic ordering between Hypergeometric and Binomial 
variables (see, e.g.,~\citep{aldous2006exchangeability}), we can write
\begin{equation*}
\E_{\mathbf{0}}\left(L_{\bpi}^2\right) \le \E\left(\big(1 + \tfrac nN(e^{A^2W_1} - 1)\big)^n \right) \le \E\big(\exp\big(\tfrac{n^2}N (e^{A^2W_1} - 1) \big)\big)
\end{equation*}
where we used the standard inequality $1 + x \le e^x$ in the last step. Since $A^2 W_1 = n^{-(1 - 2(\alpha + \delta))} \le 1$ (recall that $\alpha + \delta \le \frac 12$), we have $e^{A^2W_1} - 1 \le CA^2W_1$ and hence
\begin{equation*}
\E\big(\exp\big(\tfrac{n^2}N (e^{A^2W_1} - 1) \big)\big) \le \E\big(e^{C\frac{n^2}NA^2W_1}\big).
\end{equation*}
Reusing the convex ordering between Hypergeometric and Binomial variables, we thus 
obtain
\begin{equation*}
\E_ 0\left(L_{\bpi}^2\right) \le \big(1+\tfrac{n}{N}(e^{C\frac{n^2}{N}A^2}-1)\big)^n \le e^{C A^2\frac{n^4}{N^2}} \le e^{C n^{2\delta}}
\end{equation*}
where in the second step we used $1 + x \le e^x$ and in the final step we used $A = n^{-(1 - \alpha - \delta)}$ as well as that $N = \lfloor n^{1 + \alpha}\rfloor$. 
Now plugging this bound into \eqref{eq:risk_lowerbnd_ab} in 
Lemma~\ref{lem:risk_lowerbnd_ab} (with $E_S$ denoting the underlying full event), 
we get (notice that $\E_0(L_{\bpi}) =  1$)
\begin{equation*}
\mathcal R_n(A) \ge 1-\sqrt{\E_0\left(L_{\bpi}^2\right)-1} \ge 1 - \sqrt{e^{Cn^{2\delta}} - 1} \stackrel{(\delta < 0)}{\ge} 1 - c n^{\delta}
\end{equation*}
 which is the required lower bound (on $\mathcal R_n(A)$) in \eqref{eq:dense_below_boundary}. Notice that the condition $\alpha + \delta \le 
 \frac 12$ is crucially used in the entire argument. \qed

\subsection{Proof of Theorem~\ref{thm:sparse_below_boundary}}\label{subsec:sparse_below}
The proof of Theorem~\ref{thm:sparse_below_boundary} has several surprising elements --- both technically and conceptually --- and is the main highlight of this section.

\smallskip

\noindent{\em Proof of the upper bound.} Our argument in the previous subsection 
would suggest that a carefully designed scan test like what was used for proving 
the corresponding upper bound in the above-boundary regime in \S\ref{subsec:sparse} should attain the optimal rates. Surprisingly, it turns out that the test considered for the dense region in \S\ref{subsec:dense_below}, i.e, $T {=} 
\mathbf 1_{\{ X_{[N] \times [N]} > 0\}}$ is still the `correct' test (see 
\S\ref{subsec:overview} for more detailed discussion). Indeed, using similar 
arguments with $A = A(\delta, \alpha) = \sqrt{4(1+\delta)\alpha\log n / n}$ (see around \eqref{eq:Riskupperbb_dense}) we obtain
\begin{equation}\label{eq:bb_sparseupper}
\mathrm{Risk}(T,A,n) = \frac 12 + \Phi(-A\tfrac{n^2}N) \le 1 - c A\tfrac{n^2}N = 1 - c \sqrt{(1 + \delta)\alpha} \, n^{\frac12 - \alpha}\sqrt{\log n}\end{equation}
where the bound in the second (equivalently, the third) step is valid since 
$\alpha > \frac 12$. The above is the required upper bound (on $\mathcal R_n(A)$) in \eqref{eq:sparse_below_boundary}.

\smallskip

\noindent{\em Proof of the lower bound.} The proof of the lower bound is very delicate. We will use Lemma~\ref{lem:risk_lowerbnd_ab} {\em 
with} truncation. In view of the test $T$ considered for the upper bound above (cf.~the proof of the lower bound in 
Theorem~\ref{thm:dense_above_boundary}, part~1), one natural choice for the truncating event(s) $E_S$ would be $\{X_{[N]\times[N]} \le 
B\}$ for some suitable $B$. However, one can check that {\em no} such choice would lead to the required bound. Instead, we truncate as 
follows:
\begin{equation}\label{def:Lpi'}
L_{\bpi}' \stackrel{{\rm def.}}{=} \frac{1}{|\mathcal S|} \sum_{S \in \mathcal S} \exp\big(A X_S - \tfrac{A^2n^2}{2}\big) \mathbf{1}_{\{X_{S} \le B\}}
\end{equation}
where $B \stackrel{{\rm def.}}{=}An^2 + \Phi^{-1}(1 - A\tfrac{n^2}{N})n$. By Lemma~\ref{lem:risk_lowerbnd_ab}, we can write
\begin{equation}\label{eq:risk_lowerbnd_ab1}
\mathcal R_n(A) \ge \E_0(L_{\bpi}') - \frac12\sqrt{\var_0(L_{\bpi}')}\,.
\end{equation}
Notice that,
\begin{equation}\label{eq:expLpi_lowerbnd_ab}
\E_0(L_{\bpi}') \stackrel{{\rm Lem}~\ref{lem:CM}}{=} \frac 1{|\mathcal S|} \sum_{S \in \mathcal S}
\P_{0}\big ( X_{S} + An^2  \le B\, \big) = \Phi\big(\Phi^{-1}(1 - A\tfrac{n^2}{N})\big) =  1 - A\tfrac{n^2}{N}.
\end{equation}
In view of 
\eqref{eq:risk_lowerbnd_ab1} and \eqref{eq:expLpi_lowerbnd_ab} (and also \eqref{eq:bb_sparseupper}), it therefore suffices to prove
\begin{equation}\label{eq:second_mom_ab}
\var_0(L_{\bpi}') \le C A^2\tfrac{n^4}{N^2} \mbox{ for all $n \ge C(\alpha, \delta)$.}
\end{equation}

\smallskip

To this end, let us start by evaluating $\E_0[(L_{\bpi}')^2]$. By \eqref{def:Lpi'}, we 
can write
\begin{equation}\label{eq:ELpi'sq0}
 \E_0((L_{\bpi}')^2) = \frac 1{|\mathcal S|^2} \sum_{S, S' \in \mathcal S} e^{A^2|S \cap S'|} \, \P_0 \big( X_{S} \vee X_{S'} \le  q - A|S \cap S'| \big)
\end{equation}
where $q \stackrel{{\rm def.}}{=} \Phi^{-1}(1 - A\tfrac{n^2}{N})n = B - An^2$. From \eqref{eq:bvn}, we obtain
\begin{equation}\label{eq:pSS'}
\begin{split}
&\, \P_0 \big( X_{S} \vee X_{S'} \le  q - A|S \cap S'| \big) = p_{|S \cap S'|} \\
=&\, \Phi(\tfrac q{n} - A\tfrac{|S \cap S'|}{n})^2 + \frac 1{2\pi}\int_0^{\arcsin{\rho}} \exp\big(-\tfrac{(\tfrac{q}{n} - A\tfrac{|S \cap S'|}{n})^2}
{1 + \sin\theta} \big) d\theta\\
\le&\, \Phi(\tfrac q{n} - A\tfrac{|S \cap S'|}{n})^2 + \exp\big(- \tfrac{(\tfrac{q}{n} - A\frac{|S \cap S'|}{n})^2}{1 + \frac{|S \cap S'|}{n^2}}\big).
\end{split}
\end{equation}
In view of \eqref{eq:expLpi_lowerbnd_ab}, \eqref{eq:ELpi'sq0} and \eqref{eq:pSS'}, we 
can write
\begin{equation}\label{eq:var_bnd}
 \var_0(L_{\bpi}') \le \sum_{1 \le k, \ell \le n} \frac{N_{k, \ell}}{|\mathcal S|} \Big(e^{A^2k\ell} \Phi(\tfrac{q}{n} - A\tfrac{k\ell}{n})^2 - \Phi(\tfrac{q}{n})^2 + e^{A^2k\ell} \exp\big(- \tfrac{(\tfrac qn -  A\frac{k\ell}{n})^2}{1 + \frac{k\ell}{n^2}}\big)\Big)
\end{equation}
where $N_{k, l}$ is as in \eqref{def:Nkl}. Denoting the sum on the right-hand side above by $\Sigma$, let us split the range of summation 
underlying $\Sigma$ into three parts, namely
\begin{equation}\label{eq:var_decomp}
\Sigma = \Sigma_1 + \Sigma_2 + \Sigma_3
\end{equation}
where the ranges in $\Sigma_1$, $\Sigma_2$ and $\Sigma_3$ include all $1 \le k, \ell 
\le n$ such that $k\ell \le \tfrac 1 {A^2}$, $\tfrac 1 {A^2} < k\ell \le \tfrac {2q}A$ 
and $\tfrac {2q}A < k\ell \le n^{2}$ respectively. Note that since $A = \sqrt{\frac{4(1+\delta)\alpha\log n}{n}}$, which also yields (in view of \eqref{eq:normal_tailbnd})
\begin{equation}\label{eq:range_q}
\frac q{n \sqrt{\log n}} = \frac {\Phi^{-1}(1 - A \frac{n^2}N)}{\sqrt{\log n}} \in (c(\alpha, \delta), C(\alpha, \delta)) \mbox{ for all $n\ge C(\alpha, \delta)$,}
\end{equation}
the ranges specified for $\Sigma_i$'s are all well-defined, i.e., the left 
endpoints are smaller than the corresponding right endpoints for large $n$ 
(depending on $\alpha$ and $\delta$). We will now treat each of these sums separately. We will show that 
$\Sigma_2$ and $\Sigma_3$ decay stretched-exponentially fast and are thus of lower 
order than the desired upper bound in \eqref{eq:second_mom_ab}. It is $\Sigma_1$ which 
governs the {\em power law} behavior in \eqref{eq:second_mom_ab} and requires a very 
careful analysis especially when $\alpha \in (\frac 12, 1)$. Henceforth in this proof 
we will implicitly assume that all the statements hold for $n \ge C(\alpha, \delta)$. 

\smallskip

\noindent{\bf Bounding $\Sigma_3$.} Since $A\tfrac{k\ell}n > 2\frac{q}n$, i.e., 
$A\tfrac{k\ell}n - \frac{q}n > \frac{q}n (> 0)$, we get the following from 
\eqref{eq:var_bnd}, the upper bound on $N_{k, \ell}$ as given by 
\eqref{eq:factorial} (recall that $N = n^{1 + \alpha}$ with $\alpha > \frac12$) 
and the upper bound in \eqref{eq:normal_tailbnd}:
\begin{equation}\label{eq:Sigma3bnd1}
 \Sigma_3 \le C n^4 \sup_{\substack{1 \le k, \ell \le n,\\ k\ell > \frac{2q}A}}\exp\big(A^2k\ell - \tfrac{(A\frac{k\ell}{n} - \tfrac qn)^2}{1 + \frac{k\ell}{n^2}} + k\log \tfrac{en^{1 - \alpha}}{k}  + \ell\log \tfrac{en^{1 - \alpha}}{\ell} + C n^{1 - \alpha}\big).
\end{equation}
We will need the following result.

\vspace{0.3cm}

\noindent{{\bf Claim.}} 
Let $w, z \in (0, \infty)$ be such that $w > \tfrac{z^2}{e^2}$. Then the function $x \mapsto x \log \tfrac zx + x^\ast \log \tfrac {z}
{x^\ast}$ (cf.~\eqref{eq:factorial}), where $x x^\ast = w$, achieves its maximum on $(0, \infty)$ at the point $x = \sqrt{w}$.

\vspace{0.2cm}

We postpone the proof of this Claim until the end and continue to the pending 
bound on $\Sigma_3$. Note that any $k, \ell$ included under the summation in 
\eqref{eq:Sigma3bnd1} satisfies
\begin{equation}\label{eq:kell_bnd1}
k\ell > 2 \frac q{A} \stackrel{\eqref{eq:range_q}}{\ge} c(\alpha, \delta) \frac{n\sqrt{\log n}}{A} = c(\alpha, \delta) n^{\frac32} \stackrel{(\alpha > \frac12)}{>} n^{2(1 - \alpha)} = \frac{(e n^{1 - \alpha})^2}{e^2}\end{equation}
where in the third step we used $A = c(\alpha, \delta)\sqrt{\frac{\log n}{n}}$. 
This enables us use to use the above Claim with $k, \ell$ and $k\ell$ as $x, x^\ast$ 
and $w$ respectively for any such values of $k$ and $\ell$. Consequently, applying 
the transformation $\rho = \frac{\sqrt{k\ell}}n$, we can rewrite 
\eqref{eq:Sigma3bnd1} as follows:
\begin{equation}\label{eq:Sigma3bnd2}
\Sigma_3 \le C n^4 \sup_{c(\alpha, \delta)n^{-\frac14} \le \rho \le 1} \exp\big(A^2n^2 \rho^2 - \tfrac{(An\rho^2 - \tfrac qn)^2}{1 + \rho^2} + 2n \rho \log \tfrac{en^{-\alpha}}{\rho} 
+ C n^{1 - \alpha}\big).
\end{equation}
Let us simplify the right-hand side before proceeding further. Recalling that $A = 
\sqrt{\frac{4(1+\delta)\alpha\log n}n}$, we get the equivalent expression
\begin{equation*}
2n \rho \log \tfrac{en^{-\alpha}}{\rho} = -A^2n^2\rho(\tfrac 1{2(1 + \delta)} - \tfrac{\log \frac e{\rho}}{2(1 + \delta)\alpha \log n}).
\end{equation*}
On the other hand, since $An \cdot \frac qn \le C(\alpha, \delta)\sqrt{n}\log n$ --- which follows from the choice of $A$ above as well as \eqref{eq:range_q} --- and 
$\alpha > \frac 12$, we have the upper bound
\begin{equation*}
A^2n^2\rho^2 - \tfrac{(An\rho^2 - \tfrac qn)^2}{1 + \rho^2} + Cn^{1 - \alpha} \le A^2n^2\tfrac{\rho^2}{1 + \rho^2} + C(\alpha, \delta)\sqrt{n}\log n.
\end{equation*}
Now plugging the above two expressions into the right-hand side of 
\eqref{eq:Sigma3bnd2}, we obtain
\begin{equation*}
\Sigma_3 \le C n^4 \sup_{c(\alpha, \delta)n^{-\frac14} \le \rho \le 1} \exp\big(-A^2n^2 \rho(\tfrac 1{2(1 + \delta)} - \tfrac{\log \frac e{\rho}}{2(1 + \delta)\alpha \log n} - \tfrac{\rho}{1 + \rho^2}) + C(\alpha, \delta)\sqrt{n}\log n\big).
\end{equation*}
Since $\delta \in (-1, 0)$ and $\alpha > \frac 12$, this yields
\begin{equation}\label{eq:Sigma3bnd4}
\Sigma_3 \le C n^4\exp(-c(\alpha, \delta) A^2n^2n^{-\frac14} + C(\alpha, \delta)\sqrt{n}\log n)  \le C(\alpha, \delta) \exp(-c(\alpha, \delta) n^{\frac34}\log n).
\end{equation}

\medskip

\noindent{\bf Bounding $\Sigma_2$.} In this summation we can afford to ignore the 
negative term on the right-hand side of \eqref{eq:Sigma3bnd1} and write,
\begin{equation*}
 \Sigma_2 \le C n^4 \sup_{\substack{1 \le k, \ell \le n,\\ \frac 1{A^2} < k\ell \le \frac{2q}A}}\exp\big(A^2k\ell + k\log \tfrac{en^{1 - \alpha}}{k}  + \ell\log \tfrac{en^{1 - \alpha}}{\ell} + C n^{1 - \alpha}\big).
\end{equation*}
Since $k\ell > \frac 1{A^2} \ge c(\alpha, \delta) \frac n{\log n} > n^{2(1 - \alpha)}$ (cf.~\eqref{eq:kell_bnd1}), we can again use the Claim below 
\eqref{eq:Sigma3bnd1} to obtain (cf.~\eqref{eq:Sigma3bnd2}) 
\begin{equation*}
\Sigma_2 \le C n^4 \sup_{c(\alpha, \delta)(n\log n)^{-\frac12} \le \rho \le C(\alpha, \delta) n^{-\frac14}} \exp\big(A^2n^2 
\rho^2 + 2n \rho \log \tfrac{en^{-\alpha}}{\rho} + C n^{1 - \alpha}\big).
\end{equation*}
Since $\alpha > \frac12$, we have $\log \tfrac{en^{-\alpha}}{\rho} \le -c(\alpha, \delta) \log n$ for $\rho > c(\alpha, \delta)(n\log n)^{-\frac12}$. Consequently, 
\begin{equation*}
\Sigma_2 \le C n^4 \sup_{c(\alpha, \delta)(n\log n)^{-\frac12} \le \rho \le C(\alpha, \delta) n^{-\frac14}} \exp\big(A^2n^2 
\rho^2 - c(\alpha, \delta)n \rho \log n + C n^{1 - \alpha}\big).
\end{equation*}
The supremum above is clearly achieved at the endpoints leading to
\begin{equation}\label{eq:Sigma2bnd4}
\Sigma_2 \le C(\alpha, \delta) \exp(-c(\alpha, \delta)\sqrt{n\log n})
\end{equation}
(we also used that $n^{1 - \alpha} \le \sqrt{n}$).

\medskip

\noindent{\bf Bounding $\Sigma_1$.} As already mentioned, this is the term which 
determines the principal order of the variance of $L_{\bpi}'$. Note that since $k\ell \le \frac 1{A^2}$, i.e., $A^2k\ell \le 1$ for all the underlying $k, \ell$, 
we can bound $e^{A^2k\ell} - 1 \le C A^2k\ell$ for such pairs. Also we have, $\max(k, \ell) \le k \ell \le \frac n2$. Together with \eqref{eq:var_bnd}, \eqref{eq:factorial} and the fact that $\alpha > \frac 
12$, these observations give us
\begin{equation}\label{eq:Sigma1bnd1}
 \Sigma_1 \le C A^2\sum_{1 \le k\ell \le \frac1{A^2}}\sqrt{k\ell}\exp\big(k\log \tfrac{en^{1 - \alpha}}{k}  + \ell\log \tfrac{en^{1 - \alpha}}{\ell} - 2n^{1 - \alpha} + 2(k + \ell)n^{-\alpha}\big)
\end{equation}

We further split it into two cases depending on the value of $\alpha$.

\smallskip

{\bf Case $\alpha \ge 1$.} In this case, the sum in \eqref{eq:Sigma1bnd1} is 
dominated by the leading term corresponding to $k = \ell = 1$, i.e., 
\begin{equation}\label{eq:Sigma1bnd2_1}
 \Sigma_1 \le C A^2 n^{2(1 - \alpha)} = C A^2 \tfrac{n^4}{N^2} \quad \mbox{(cf.~\eqref{eq:second_mom_ab}).}
\end{equation}

\smallskip

{\bf Case $\alpha \in (\frac 12, 1)$.} This is more involved. We start with a 
rewriting of \eqref{eq:Sigma1bnd1} as follows.
\begin{equation}\label{eq:Sigma1bnd2_2}
\Sigma_1 \le CA^2 \sum_{1 \le k\ell \le \frac1{A^2}} \exp(f(k) + f(\ell))
\end{equation}
where the function $f: (0, \infty) \to \R$ is defined as 
\begin{equation}\label{def:f}
f(x) = \tfrac12\log x + x\log \tfrac{en^{1 - \alpha}}{x} - n^{1 - \alpha} + 2xn^{-\alpha}, \: x \in (0, \infty).
\end{equation}
$f$ is strictly concave and attains its maximum at the point $x^\ast \in (0, \infty)$ given by the equation
\begin{equation*}
f'(x^\ast) = \frac 1 {2x^\ast} + \log \frac{n^{1 - \alpha}}{x^\ast} + 2n^{-\alpha} = 0.
\end{equation*}
Using the facts $\log (1 + x) \ge x - \frac{x^2}{2}$ for $x \ge 0$ (a consequence of 
Taylor's expansion) and $\alpha \in (\frac12, 1)$ (we will use this implicitly in the remainder of the proof), we see that 
\begin{equation}\label{eq:rangex_ast}
x^\ast \in (n^{1-\alpha}, n^{1-\alpha} + 1)    
\end{equation}
This range together with the bound $\log(1 + x) \le x$ gives us
\begin{equation}\label{eq:fxast}
f(x^\ast) \le \frac12\log n^{1 - \alpha} + 1
\end{equation}
in view of \eqref{def:f}. Further, since $f$ is decreasing on 
$(x^\ast, \infty) \subset [n^{1 - \alpha} + 1, \infty)$, 
we have
\begin{equation}\label{eq:fxbnd}
f(x) \le f(e n^{1 - \alpha}) \stackrel{\eqref{def:f}}{\le} -\frac12 n^{1 - \alpha} \mbox{ for all } x \ge en^{1 - \alpha}.
\end{equation}
Now noting that $f''(x) = -\frac1x - \frac1{2x^2}$ and $n^{1 - \alpha} > 1$ 
(recall that $\alpha < 1$), we get from Taylor expansion around $x = x^\ast$:
\begin{equation}\label{eq:fbnd2}
f(x) \le  f(x^\ast) -\frac{c}{x^\ast}(x - x^\ast)^2 \mbox{ for all } x \in (0, en^{1 - \alpha}).
\end{equation}
Putting the previous four bounds together we obtain
\begin{equation*}
\begin{split}
\sum_{1 \le k \le n} \exp(f(k)) &\stackrel{\eqref{eq:fxbnd}, \eqref{eq:fbnd2}}{\le} C \exp(f(x^\ast)) \int_{-\infty}^{\infty} \exp(-\tfrac{c(x - x^\ast)^2}{x^\ast}) dx  + n\exp(-c n^{1 - \alpha}) \\
&\le C \exp(f(x^\ast)) \sqrt{x^\ast} \stackrel{\eqref{eq:rangex_ast}, \eqref{eq:fxast}}{\le} C n^{1 - \alpha}.
\end{split}
\end{equation*}
Plugging this bound into \eqref{eq:Sigma1bnd2_2}, we are finally led to
\begin{equation}\label{eq:Sigma1bnd2_4}
\Sigma_1 \le CA^2 n^{2(1 - \alpha)} = CA^2 \tfrac{n^4}{N^2}.
\end{equation}

\medskip

Combining the bounds on $\Sigma_1$, $\Sigma_2$ and $\Sigma_3$, we obtain from \eqref{eq:var_bnd} and \eqref{eq:var_decomp}:
\begin{equation*}
\var(L_{\bpi}') \le  \Sigma_1 + \Sigma_2 + \Sigma_3 \stackrel{\eqref{eq:Sigma3bnd4}, \eqref{eq:Sigma2bnd4}, \eqref{eq:Sigma1bnd2_1}, \eqref{eq:Sigma1bnd2_4}}{\le} CA^2 \tfrac{n^4}{N^2}
\end{equation*}
which is precisely \eqref{eq:second_mom_ab}, thus completing the proof. It only 
remains to give the

\smallskip

\noindent{{\em Proof of the Claim.}} Let us consider the function $f: [0, \infty)^2 \to [0, \infty)$ defined as $f(x, y) = x \log \tfrac zx + y \log \tfrac {z}{y}$. $f$ is 
continuous and is differentiable on $(0, \infty)^2$. Furthermore, since $f(x, y)$ tends to $-\infty$ as $x \to 0$ and $y \to \infty$ (or 
vice versa), we obtain that the function $x \mapsto x \log \tfrac zx + x^\ast \log \tfrac {z}{x^\ast}$, with $xx^\ast = w$, attains its maximum at some point $x \in 
(0, \infty)$. Therefore using the method of Lagrange multipliers, we obtain that 
$x$ necessarily satisfies
\begin{equation}\label{eq:zxl}
\log \tfrac z{ex} = \lambda x^\ast \mbox{ and } \log \tfrac z{ex^\ast} = \lambda x
\end{equation}
for some $\lambda \in \R$. Consequently, we have $x \log \tfrac z{ex}  = x^\ast \log \tfrac z{ex^\ast}$. However, the function $x \mapsto x \log \tfrac z{ex}$ is strictly monotone on either side of $\tfrac z{e^2}$. Therefore for \eqref{eq:zxl} to hold, we need either $x = x^\ast = \sqrt{w}$ or $x, x^\ast$ to lie on 
the opposite sides of $\tfrac z{e^2}$. Now since $xx^\ast = w > \tfrac{z^2}{e^2}$, 
at least one of $x$ and $x^\ast$ must be larger than $\tfrac{z}e$. Consequently, if 
$x \ne x^\ast$, we must have $\log \tfrac z{ex} \log \tfrac z{ex^\ast} < 0$ which 
contradicts \eqref{eq:zxl} thereby yielding the Claim. \qed

\section{Proof in the critical regime}\label{sec:criticality} 
The sole result that we prove in this section is Theorem~\ref{thm:criticality}. 
Like with the previous results, we first give the

\smallskip

\noindent{\em Proof of the upper bound.} We use the following scan test (cf.~\eqref{def:Tscan}):
\begin{equation*}
T = T_{{\rm scan}}^{2\sqrt{n \log N}} = \mathbf 1_{\big\{\max_{S \in \mathcal S} \frac {X_S}{n} > 2\sqrt{n \log N}\big\}}.
\end{equation*}
Since each $\frac{X_S}n$ is distributed as $N(0, 1)$ under $\P_0$, we can use the upper tail bound in \eqref{eq:normal_tailbnd} and a 
union bound to deduce
\begin{equation*}
\P_0(T = 1) \le N^{2n} (1 - \Phi(2 \sqrt{n \log N})) \le \frac C {\sqrt{n \log N}}
\end{equation*}
(for $n, N > 1$). As to the type-II error, let us note that $\frac{X_S(\btheta)}n$ 
is distributed as $N(An, 1)$ under $\P_{\btheta}$ for any $\btheta \in \Theta(A)$ (cf.~\eqref{eq:scan_upperP2}) and hence with $A = A^\ast = 2\sqrt{\frac{\log N}n}$,
\begin{equation*}
\P_{\btheta}(T = 0) \le \Phi(2\sqrt{n\log N} - An) = \Phi(0) =  \frac12
\end{equation*}
for all $\btheta \in \Theta(A)$. Together, the last two displays yield the upper bound in \eqref{eq:lim_critical}.

\smallskip

\noindent{\em Proof of the lower bound.} For the lower bound, let
\begin{equation*}
L_{\bpi}' \stackrel{{\rm def.}}{=} \frac1{|\mathcal S|} \sum_{S \in \mathcal S} \exp\big(A X_S - \tfrac{A^2n^2}{2}\big) \mathbf{1}_{\{X_{S} \le An^2\}}
\end{equation*}
so that we can write, by \eqref{eq:risk_lowerbnd_ab},
\begin{equation*}
\mathcal R_n(A) \ge \E_0(L_{\bpi}') - \frac12\sqrt{\var_0(L_{\bpi}')}.
\end{equation*}
Using similar argument as in \eqref{eq:expLpi_lowerbnd_ab}, we get $\E_0(L_{\bpi}') = 
\frac 12$. Therefore, all we need to show is that $\lim_{N \to \infty} \var_0(L_{\bpi}') = 0$. To this end, we can write similarly as in \eqref{eq:ELpi'sq0}:
\begin{equation*}
\var_0(L_{\bpi}') \le \E_0((L_{\bpi}')^2) 
= \frac 1{|\mathcal S|^2} \sum_{S, S' \in \mathcal S} e^{A^2|S \cap S'|} \, \P_0 \big( X_{S} \vee X_{S'} \le  - A|S \cap S'| \big).
\end{equation*}
We bound these probabilities by \eqref{eq:pSS'} (with $0$ in place of $q$) for 
all values of $|S \cap S'|$ {\em except} when $|S \cap S'| = n^2$ (i.e., $S = S'$) in 
which case we can use the more direct (and exact) bound, namely (recall that $X_S$ is 
distributed as $N(0, n^2)$ under $\P_0$)
\begin{equation*}
\P_0 \big(X_{S} \le  - An^2 \big) \stackrel{\eqref{eq:normal_tailbnd}}{\le} \frac 1{An} e^{-\frac{A^2n^2}2}.
\end{equation*}
All in all we obtain, as in \eqref{eq:var_bnd},
\begin{equation*}
\var_0(L_{\bpi}') \le C \sum_{\substack{1 \le k, \ell \le n,\\ k\ell \ne n^2}} \frac{N_{k, \ell}}{|\mathcal S|} e^{A^2k\ell} \exp\big(- \tfrac{A^2(k\ell)^2}{n^2 + k\ell}\big) + \frac1{|\mathcal S|} e^{A^2 n^2} \frac 1{An} e^{-\frac{A^2n^2}2}
\end{equation*}
where $N_{k, l}$ is as in \eqref{def:Nkl} and we already used the upper tail bound for $\Phi(-\tfrac{A k\ell}n)$ from \eqref{eq:normal_tailbnd} in the first 
summation. 
\begin{equation*}
\tfrac{C}{\sqrt{k \vee 1}\frac {(n-k)\vee1} n}\exp(k\log \tfrac {en^2}{Nk} -\tfrac{n^2}N + \tfrac{2kn}N)    
\end{equation*}
Since $\frac{N_{k, \ell}}{|\mathcal S|} \le C(n) N^{- k - \ell}$ and $A = 2\sqrt{\frac{\log 
N}n}$, we can rewrite 
the above as
\begin{equation}\label{eq:var_bndcrit}
\var_0(L_{\bpi}') \le C(n) \sum_{\substack{1 \le k, \ell \le n,\\ k\ell \ne n^2}} \exp\big(- \log N (\tfrac{4(k\ell)^2}{n(n^2 + k\ell)} + k + \ell  - 4\tfrac{k\ell}n)\big) + \frac 1{\sqrt{n \log N}}.
\end{equation}
Now using the standard inequality $k + \ell \ge 2\sqrt{k \ell}$ and a 
little algebra we obtain
\begin{equation*}
\begin{split}
\tfrac{4(k\ell)^2}{n(n^2 + k\ell)} + k + \ell  - 4\tfrac{k\ell}n \ge \tfrac{4(k\ell)^2}{n(n^2 + k\ell)} + 2\sqrt{k\ell}  - 4\tfrac{k\ell}n = \tfrac{2\sqrt{k\ell}}{n^2 + k\ell} (n - \sqrt{k \ell})^2.
\end{split}
\end{equation*}
This term is bounded below by $c(n) (> 0) $ when $k\ell < n^2$, i.e., $k\ell 
\le n(n-1)$. Plugging this bound into the right-hand side of \eqref{eq:var_bndcrit} we get $\lim_{N \to \infty} \var_0(L_{\bpi}') = 0$ finishing the proof. \qed

\bigskip

\noindent{\bf Acknowledgement.} SG's research is supported by the ANRF grant ANRF/ARGM/2025/001947/MTR and a grant from the Department of Atomic Energy, 
Government of India, under Project Identification No.~RTI4014. The authors sincerely thank Cristina Butucea and Subhabrata Sen for many conversations at the beginning of the project that helped them deeply in shaping the paper. The authors acknowledge the hospitality of the Statistics and Mathematics Unit, Indian Statistical Institute (ISI), Kolkata on several occasions. RM thanks the School of Mathematics, Tata Institute of Fundamental Research (TIFR), Mumbai for their generous hospitality.

\bibliographystyle{apalike}

\end{document}